\documentclass[10pt]{article}
\usepackage[a4paper,centering,hscale=0.7,vscale=0.75]{geometry}

\usepackage{lmodern}
\usepackage[T1]{fontenc}

\usepackage{mathtools}
\usepackage{amsthm}

\makeatletter
\def\@endtheorem{\endtrivlist}
\makeatother

\usepackage{amssymb,%
  mathrsfs}

\usepackage{xcolor}
\usepackage{hyperref}


\newtheorem{prop}{Proposition}[section]
\newtheorem{thm}[prop]{Theorem}
\newtheorem{lemma}[prop]{Lemma}
\newtheorem{coroll}[prop]{Corollary}
\newtheorem{defi}[prop]{Definition}
\theoremstyle{remark}
\newtheorem{rmk}[prop]{Remark}
\theoremstyle{definition}
\newtheorem{hypo}[prop]{Hypothesis}

\newcommand{\cA}{\mathscr{A}}
\newcommand{\cB}{\mathscr{B}}
\DeclareMathOperator*{\dom}{\mathsf{D}}

\newcommand{\cF}{\mathscr{F}}
\newcommand{\cG}{\mathscr{G}}
\newcommand{\cL}{\mathscr{L}}
\newcommand{\bN}{\mathbb{N}}
\renewcommand{\P}{\mathbb{P}}
\newcommand{\cP}{\mathscr{P}}
\newcommand{\bQ}{\mathbb{Q}}
\newcommand{\erre}{\mathbb{R}}

\renewcommand{\geq}{\geqslant}
\renewcommand{\leq}{\leqslant}
\newcommand{\embed}{\hookrightarrow}
\DeclareMathOperator*{\id}{id}
\newcommand{\loc}{\textnormal{loc}}

\newcommand{\ind}[1]{{1}_{#1}}

\numberwithin{equation}{section}

\DeclarePairedDelimiter\abs{\lvert}{\rvert}
\DeclarePairedDelimiter\norm{\lVert}{\rVert}

\DeclarePairedDelimiterX\ip[2]{\langle}{\rangle}{#1,#2}

\mathtoolsset{centercolon}

\makeatletter
\def\@maketitle{%
  \newpage
  \begin{center}%
  \let \footnote \thanks
    {\LARGE \bf \@title \par}%
    \vskip 1.5em%
    {\large
      \lineskip .5em%
      \begin{tabular}[t]{c}%
        \@author
      \end{tabular}\par}%
    \vskip 1em%
    {\@date}%
  \end{center}%
  \par
  \vskip 1em}
\makeatother

\makeatletter

\renewenvironment{itemize}{%
  \ifnum\@itemdepth>3 \@toodeep
  \else \advance\@itemdepth\@ne
    \edef\@itemitem{labelitem\romannumeral\the\@itemdepth}%
    \list{\csname\@itemitem\endcsname}%
      {\def\makelabel##1{\hss\llap{\upshape##1}}}%
  \fi
}{%
  \endlist
}

\makeatother


\begin{document}

\title{A note on stochastic semilinear dissipative evolution equations}
\author{Carlo Marinelli\thanks{Department of Mathematics, University
       College London, Gower Street, London WC1E 6BT, United
       Kingdom.}}

\date{\normalsize 20 December 2025}

\maketitle

\begin{abstract}
  Existence and uniqueness of mild solutions to a class of semilinear
  stochastic evolution equations with additive noise is proved. The
  linear part of the drift term is the generator of a compact
  semigroup of contractions, while the nonlinear part is only assumed
  to be the superposition operator associated to a decreasing
  function.
\end{abstract}


\section{Introduction}
\label{sec:intro}
Consider the stochastic evolution equation
\begin{equation}
  \label{eq:Add}
  du + Au\,dt + f(u)\,dt = B\,dW, \qquad u(0) = u_0,
\end{equation}
on which the following assumptions are made. Let \(G\) be a bounded
domain of \(\erre^d\) with smooth boundary. The unbounded linear
operator \(A\) on \(L^2 := L^2(G)\) is the negative generator of a
strongly continuous contraction semigroup \(S\) on \(L^2\) satisfying
suitable compactness properties (see \S\ref{sec:mr} for precise
conditions). %
Assuming that \(f\colon \erre \to \erre\) is a measurable increasing
function, the superposition operator on \(L^0(G)\) associated to \(f\)
is denoted by the same symbol. %
Let \((\Omega,\cF,\P)\) be a probability space endowed with a
filtration \((\cF_t)_{t \in [0,\infty]}\) on which a cylindrical
Wiener process \(W\) with values in a separable Hilbert space \(K\) is
defined. The initial datum \(u_0\) is a \(\cF_0\)-measurable
\(L^2\)-valued random variable. The diffusion coefficient \(B\) is a
strongly measurable and adapted process taking values in the space of
continuous linear operators from \(K\) to \(L^2\), such that the
stochastic convolution
\[
  \int_0^\cdot S(\cdot - s) B(s)\,dW(s)
\]
is bounded on every bounded subset of \(\erre_+ \times G\) with
probability one. This is clearly the case if, for instance, the
stochastic convolution is continuous in time and space, a condition
that has been extensively studied in the literature (see, e.g.,
\cite{BvN:hn,DPZ,KhoSS:reg}). Denoting the Hilbert space of
Hilbert-Schmidt operators from \(K\) to a Hilbert space \(H\) by
\(\cL^2(K;H)\), an elementary sufficient condition for the stochastic
convolution to be continuous in space and time is that \(H\) is
embedded in \(C(\overline{G})\) and \(B\) is a measurable adapted
\(\cL^2(K;H)\)-valued process such that
\[
  B \in L^0(\Omega;L^2_\loc(\erre_+;\cL^2(K;H))),
\]
that is
\[
  \P\biggl( \int_0^T \norm[\big]{B(t)}^2_{\cL^2(K;H)} dt < \infty \biggr) = 1
  \qquad \forall T \in \erre_+.
\]

The problem considered here is the existence of a unique
probabilistically strong solution to the stochastic evolution equation
\eqref{eq:Add} in a properly defined sense (see \S\ref{sec:mr}). %
As is well known, in contrast to the deterministic setting (see, e.g.,
\cite{Bmax, Dor:NLHY}), there is no general well-posedness result for
stochastic evolution equations such as \eqref{eq:Add} under the sole
assumption that \(A+f\) is a (nonlinear) maximal monotone operator on
\(L^2(G)\). Even though this general problem remains open, some
special cases have been shown to be well posed in a suitable sense. %
A particularly interesting result was obtained in \cite{Barbu:lincei},
where existence and uniqueness of a ``pathwise'' mild solution to
\eqref{eq:Add} is proved in the case that \(A=-\Delta\). Such a
solution is in general only an random function that is not necessarily
a measurable stochastic process.  A more general problem has been
considered in \cite{cm:AP18} (as well as in \cite{cm:div3, cm:refi,
  cm:semimg}), assuming that the linear operator \(A\) is
``variational'' and allowing the diffusion coefficient to depend on
the unknown, obtaining \emph{bona fide} measurable processes as
solutions.

The contribution of this short note is to prove that, without assuming
that \(A\) has any variational structure, but, rather, that \(A\) is
the generator of a compact contraction semigroup, \eqref{eq:Add}
admits a unique probabilistically strong solution. The method of proof
consists of the following steps: the original equation is approximated
by simpler equations where the function \(f\) is replaced by its
Yosida approximation. Working pathwise, i.e.\ for \(\omega\) fixed in
an event of full probability, a priori estimates are obtained for the
solutions to the approximating equations, following closely
\cite{Barbu:lincei}. Limits in appropriate topologies of the solutions
to such approximating equations are then shown to solve the original
equation, relying neither on special properties of the Laplacian nor
on a variational setting for the operator \(A\). In fact, in contrast
to \cite{Barbu:lincei} and \cite{cm:AP18}, instead of relying on
compactness arguments of Aubin-Lions-Simon type, a compactness result
for deterministic convolutions is used (cf. \S\ref{ssec:dcc}).

The remaining text is organized as follows: \S\ref{sec:prel} collects
some preliminaries as well as auxiliary results. The notion of
solution as well as the statement of the well-posedness result and its
proof are given in \S\ref{sec:mr}.

\section{Preliminaries and auxiliary results}
\label{sec:prel}

\subsection{Notation}
\label{ssec:not}
Let us set \(\overline{\erre} = \erre \cup \{-\infty,+\infty\}\),
\(\erre_+ := \mathopen[0,+\infty\mathclose[\), and
\(\erre_+^\times := \erre_+ \setminus \{0\}\).  An expression of the
type \(a \lesssim b\) means that there exists 
\(N \in \erre_+^\times\) such that \(a \leq Nb\).
The identity map of any set is denoted by \(\operatorname{id}\) and
the inverse of a graph or a function \(\phi\) is denoted by
\(\phi^\leftarrow\).
If \((X,\cA,\mu)\) is a measure space, the integral of a measurable
function \(\phi\colon X \to \overline{\erre}\) with respect to \(\mu\)
will also be denoted by \(\mu(\phi)\) or even by \(\mu\phi\). Let
\(E\) be a Banach space. If \(\mu\) is a finite measure, the vector
space of \(\mu\)-equivalence classes of \(\cA\)-measurable functions
\(X \to E\), denoted by \(L^0(\mu;E)\) or \(L^0(X;E)\), is always
assumed to be endowed with the topology of convergence in measure,
thus becoming a complete metrizable topological vector space.  If
\(G\) is the bounded domain of \(\erre^d\) of \S\ref{sec:intro} and
\(p \in [1,\infty]\), we shall denote \(L^p(G)\) simply by \(L^p\).
The domain of an unbounded operator \(u\) on \(E\) is denoted by
\(\dom(u)\). The same notation is used if \(u\) is a graph on \(E\),
i.e.\ a subset of \(E \times E\), in which case then
\(\dom(u) = \operatorname{pr}_1(u)\).
Let \(S\) be a strongly continuous semigroup on \(E\). If
\(\phi\colon \erre_+ \to E\) is a measurable function such that
\(S(t-\cdot)\phi \in L^1(0,t;E)\) for every \(t \in \erre_+\), the
deterministic convolution \(S \ast \phi\colon \erre_+ \to E\) is the
function defined by
\[
  S \ast \phi\colon t \mapsto \int_0^t S(t-s)\phi(s)\,ds.
\]
In particular, \(S \ast \phi\) is well defined for every
\(\phi \in L^1_\loc(\erre_+;E)\).

All random variables and stochastic processes are defined on a fixed
filtered probability space
\((\Omega,\cF,(\cF_t)_{t\in[0,+\infty]},\P)\) that is assumed to
satisfy the ``usual'' conditions of right-continuity and completeness.
Similarly to the deterministic case, if \(M\) is an \(E\)-valued
stochastic process for which the stochastic integral
\(\bigl( S(t - \cdot) \cdot M \bigr)_t\) is well defined for every
\(t \in \erre_+\), the stochastic convolution \(S \diamond M\) is the
process defined by
\[
  S \diamond M(t) := \bigl( S(t - \cdot) \cdot M \bigr)_t
  = \int_0^t S(t-s) \,dM(s).
\]

\subsection{Convex analysis and accretive operators}
\label{ssec:coac}
We collect a few basic definitions and facts from convex analysis and
the theory of accretive operators referring to, e.g.,
\cite{Barbu:type, Mor:Conv, rock}, for details and proofs.

Let \(\phi\colon \erre \to \overline{\erre}\) be a function. We shall
say that \(\phi\) is 1-coercive if
\[
  \lim_{\abs{r} \to \infty} \frac{\phi(r)}{\abs{r}} = +\infty.
\]
The subdifferential of \(\phi\) at a point \(x \in \erre\), denoted by
\(\partial\phi(x)\), is the subset of \(\erre\) defined by
\[
  \partial\phi(x) := \bigl\{ \zeta :\; \phi(y) - \phi(x) \geq \zeta(y -
  x) \quad \forall y \in \erre \bigr\}.
\]
The conjugate of \(\phi\) is the (necessarily convex) function
\(\phi^\ast\colon \erre \to \overline{\erre}\) defined by
\[
  \phi^\ast (y) := \sup_{x \in \erre}\,\bigl( xy - \phi(x) \bigr).
\]
If \(\phi\) is convex and such that \(\phi(x) \neq \infty\) for every
\(x \in \erre\), then \(\phi^\ast\) is 1-coercive.
The definition of \(\phi^\ast\) trivially implies that, for every
\(x,y \in \erre\), \(xy \leq \phi(x) + \phi^\ast(y)\). Moreover, one
has \(xy = \phi(x) + \phi^\ast(y)\) if and only if
\(y \in \partial\phi(x)\).

Given a Banach space \(E\), the \emph{bracket}
\([\cdot{,}\cdot]\colon E \times E \to \erre\) is defined by
\[
  [x,y] := \lim_{h \to 0+} \frac{\norm{x+hy} - \norm{x}}{h}
          = \inf_{h \in \erre_+^\times} \frac{\norm{x+hy} - \norm{x}}{h},
\]
i.e.\ \([x,y]\) is the (right) directional derivative of the norm of
\(E\) at \(x\) in the direction \(y\).
A graph \(A \subset E \times E\) is \emph{accretive} if for every
\((x_1,y_1), (x_2,y_2) \in A\) and \(\lambda \in \erre_+^\times\) one
has
\[
\norm{x_1-x_2 + \lambda(y_1-y_2)} \geq \norm{x_1-x_2}.
\]
An accretive graph \(A\) is \(m\)-\emph{accretive} if
\(A + \operatorname{id}\) is surjective.
Let \(A\) be accretive. Then \((x_1,y_1)\), \((x_2,y_2) \in A\)
implies \([x_1-x_2,y_1-y_2] \geq 0\).
Let \(u_0\) be an element of the closure of \(\dom(A)\) in \(E\),
\(f \in L^1(0,T;E)\), and consider equations of the type
\[
  u' + Au \ni f, \qquad u(0) = u_0.
\]
If \(u_i\) is a mild solution with initial condition \(u_{0i}\) and
right-hand side \(f_i\), \(i \in \{1,2\}\), then, for every
\(t \in [0,T]\),
\[
  \norm{u_1(t) - u_2(t)} \leq \norm{u_{01} - u_{02}}
  + \int_0^t \bigl[ u_1(s) - u_2(s), f_1(s) - f_2(s) \bigr]\,ds.
\]
We shall actually use this estimate only in the case where \(A\) is
linear and \(m\)-accretive, in which case the closure of the domain of
\(A\) is \(E\).

\subsection{An energy estimate for mild solutions}
\begin{lemma}
  \label{lm:sqnm}
  Let \(S\) be a contraction semigroup on a Hilbert space \(H\),
  \(v_0 \in H\), \(f \in L^1_\loc(\erre_+;H)\), and
  \(v = S v_0 + S \ast f\). Then
  \[
    \norm{v(t)}^2 \leq \norm{v_0}^2 + 2 \int_0^t \ip{f(s)}{v(s)}\,ds
    \qquad \forall t \in \erre_+.
  \]
\end{lemma}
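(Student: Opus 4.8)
The plan is to regularize the (possibly unbounded) generator of \(S\) by its Yosida approximation, prove the estimate for the resulting \emph{classical} problems — where the squared norm is a genuinely absolutely continuous function of time — and then let the regularization parameter tend to zero. Write \(-A\) for the generator of \(S\); by the Lumer--Phillips theorem \(A\) is \(m\)-accretive, and since \(H\) is a Hilbert space this means \(\ip{Ax}{x} \geq 0\) for every \(x \in \dom(A)\) (squaring the accretivity inequality and letting the parameter tend to \(0\)). For \(\varepsilon \in \erre_+^\times\) let \(J_\varepsilon := (\id + \varepsilon A)^{-1}\) and \(A_\varepsilon := \varepsilon^{-1}(\id - J_\varepsilon) = A J_\varepsilon\) be the Yosida approximation of \(A\): this is a bounded accretive operator, so that \(-A_\varepsilon\) generates a uniformly continuous contraction semigroup \(S_\varepsilon\) on \(H\) with \(S_\varepsilon(t) \to S(t)\) strongly, uniformly for \(t\) in compact intervals, as \(\varepsilon \to 0\).

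Set \(v_\varepsilon := S_\varepsilon v_0 + S_\varepsilon \ast f\). Since \(A_\varepsilon\) is bounded and \(f \in L^1_\loc(\erre_+;H)\), the function \(v_\varepsilon\) is locally absolutely continuous and satisfies the integrated equation \(v_\varepsilon(t) = v_0 + \int_0^t\bigl(-A_\varepsilon v_\varepsilon(s) + f(s)\bigr)\,ds\), hence \(v_\varepsilon' = -A_\varepsilon v_\varepsilon + f\) for almost every \(t\). Then \(t \mapsto \norm{v_\varepsilon(t)}^2\) is itself absolutely continuous and, for almost every \(t\),
\[
\frac{d}{dt}\norm{v_\varepsilon(t)}^2 = 2\ip{v_\varepsilon'(t)}{v_\varepsilon(t)} = -2\ip{A_\varepsilon v_\varepsilon(t)}{v_\varepsilon(t)} + 2\ip{f(t)}{v_\varepsilon(t)} \leq 2\ip{f(t)}{v_\varepsilon(t)},
\]
where the inequality uses the accretivity of \(A_\varepsilon\) (which follows from that of \(A\) together with the identity \(\id = J_\varepsilon + \varepsilon A_\varepsilon\)). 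Integrating on \([0,t]\) yields \(\norm{v_\varepsilon(t)}^2 \leq \norm{v_0}^2 + 2\int_0^t \ip{f(s)}{v_\varepsilon(s)}\,ds\).

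It remains to pass to the limit \(\varepsilon \to 0\). Since the \(S_\varepsilon(t)\) are contractions, \(\norm{v_\varepsilon(s)} \leq \norm{v_0} + \norm{f}_{L^1(0,t;H)}\) uniformly in \(\varepsilon\) and in \(s \in [0,t]\); moreover \(S_\varepsilon v_0 \to S v_0\) pointwise and, by dominated convergence (the integrand being dominated by \(2\norm{f(s)}\)), \(S_\varepsilon \ast f(s) \to S \ast f(s)\) for each \(s\), so that \(v_\varepsilon(s) \to v(s)\) in \(H\) for every \(s \in [0,t]\). Consequently \(\norm{v_\varepsilon(t)}^2 \to \norm{v(t)}^2\), while \(\ip{f(s)}{v_\varepsilon(s)} \to \ip{f(s)}{v(s)}\) pointwise with \(\abs{\ip{f(s)}{v_\varepsilon(s)}} \leq \bigl(\norm{v_0} + \norm{f}_{L^1(0,t;H)}\bigr)\norm{f(s)} \in L^1(0,t)\), so a further application of dominated convergence gives \(\int_0^t \ip{f(s)}{v_\varepsilon(s)}\,ds \to \int_0^t \ip{f(s)}{v(s)}\,ds\). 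Passing to the limit in the estimate for \(v_\varepsilon\) concludes the proof.

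I expect the only genuinely delicate point to be the justification that \(v_\varepsilon\) is absolutely continuous with \(v_\varepsilon' = -A_\varepsilon v_\varepsilon + f\) almost everywhere when \(f\) is merely locally integrable — i.e.\ differentiating the convolution \(S_\varepsilon \ast f\) at Lebesgue points of \(f\); everything else (accretivity of the Yosida approximation, contractivity and strong convergence of \(S_\varepsilon\), and the two dominated-convergence arguments) is routine. An alternative that sidesteps the a.e.\ differentiation is to keep \(S\) fixed and instead approximate the data by \(v_0^n \in \dom(A)\) and \(f_n \in C^1\), for which \(v_n\) is a strong solution and the chain rule applies classically, passing to the limit afterwards by continuous dependence of the mild solution on \((v_0,f)\).
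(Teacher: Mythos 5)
Your argument is correct, but it regularizes a different object than the paper does. The paper keeps the semigroup \(S\) fixed and approximates the data: for \(f \in L^1(0,T;\dom(A))\) the mild solution is a strong one, so the integration-by-parts identity \(\norm{v(t)}^2 + 2\int_0^t \ip{Av}{v} = \norm{v_0}^2 + 2\int_0^t\ip{f}{v}\) holds classically and monotonicity of \(A\) kills the extra term; the general case then follows from the continuity of \(\phi \mapsto S \ast \phi\colon L^1(0,T;H) \to C([0,T];H)\) and the density of \(L^1(0,T;\dom(A))\). You instead regularize the operator via its Yosida approximation \(A_\varepsilon\), reduce to an ODE with a bounded accretive operator where the chain rule for \(t \mapsto \norm{v_\varepsilon(t)}^2\) is elementary, and pass to the limit using \(S_\varepsilon \to S\) strongly on compacts plus two dominated-convergence arguments. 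This is essentially the second alternative you mention at the end, dualized. What your route buys is that the approximate problem is genuinely classical (no need to invoke the strong-solution theory for inhomogeneous Cauchy problems, and no need to approximate \(v_0\) into \(\dom(A)\) -- a point the paper's proof actually glosses over); the price is importing the convergence \(S_\varepsilon(t) \to S(t)\) from the Hille--Yosida construction and justifying the one step you correctly flag as delicate, namely that \(v_\varepsilon(t) = v_0 + \int_0^t\bigl(-A_\varepsilon v_\varepsilon(s)+f(s)\bigr)\,ds\) for merely locally integrable \(f\). That step is fine and does not require differentiating at Lebesgue points: it follows from a Fubini computation, since
\[
\int_0^t\!\!\int_0^s A_\varepsilon S_\varepsilon(s-r)f(r)\,dr\,ds
= \int_0^t\!\!\int_r^t A_\varepsilon S_\varepsilon(s-r)f(r)\,ds\,dr
= \int_0^t \bigl(f(r) - S_\varepsilon(t-r)f(r)\bigr)\,dr,
\]
which, combined with the analogous identity for \(S_\varepsilon(t)v_0\), gives the integrated equation directly and hence the absolute continuity of \(v_\varepsilon\) with the stated a.e.\ derivative. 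With that observation made explicit, your proof is complete.
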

\begin{proof}
  The negative generator \(-A\) of \(S\) is maximal monotone, and
  \(v\) is the mild solution to
  \[
    y' + Ay = f, \qquad y(0)=v_0.
  \]
  If \(T \in \erre_+\) and \(f \in L^1(0,T;\dom(A))\) then \(v\) is a
  strong solution, hence the integration-by-parts formula yields
  \[
    \norm{v(t)}^2 + 2 \int_0^t \ip{Av(s)}{v(s)}\,ds = \norm{v_0}^2
    + 2\int_0^t \ip{f(s)}{v(s)}\,ds
  \]
  for every \(t \in [0,T]\). As the second term on the left-hand side
  is positive by the monotonicity of \(A\), the claim follows under
  the additional assumption that \(f \in L^1(0,T;\dom(A))\). The proof
  of the general case then follows noting that
  \(\phi \mapsto S \ast \phi\colon L^1(0,T;H) \to C([0,T];H)\) is continuous
  and \(L^1(0,T;\dom(A))\) is dense in \(L^1(0,T;H)\).
\end{proof}

\subsection{A uniform integrability criterion}
The following is a simple generalization of the sufficiency part in
the criterion for uniform integrability by de la Vall\'ee Poussin in
the special case of finite measure spaces.
\begin{prop}
  \label{prop:dlVPext}  
  Let \((X,\cA,\mu)\) be a finite measure space and
  \(F\colon \erre \to \erre\) a measurable function that is bounded
  from below and 1-coercive.  If \(\cG \subset L^0(\mu)\) is such that
  there exists \(N \in \erre_+^\times\) for which
  \[
    \norm[\big]{F(g)}_{L^1(\mu)} < N \qquad \forall g \in \cG,
  \]
  then \(\cG\) is uniformly integrable with respect to \(\mu\).
\end{prop}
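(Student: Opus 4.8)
The plan is to verify the standard tail characterization of uniform integrability directly. Recall that, since $\mu$ is finite, a family $\cG \subset L^1(\mu)$ is uniformly integrable precisely when
\[
  \lim_{c \to +\infty} \sup_{g \in \cG} \int_{\{\abs{g} > c\}} \abs{g}\,d\mu = 0 .
\]
The guiding idea is that $1$-coercivity of $F$ allows one to dominate $\abs{g}$ by an arbitrarily small multiple of $F(g)$ exactly on the region where $\abs{g}$ is large, and on that same region $F(g)$ is automatically nonnegative; the uniform bound $\norm{F(g)}_{L^1(\mu)} < N$ then controls the tails uniformly in $g$.

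First I would extract the quantitative form of $1$-coercivity: for every $M \in \erre_+^\times$ there exists $R_M \in \erre_+^\times$ such that $F(r) \geq M\abs{r}$ whenever $\abs{r} \geq R_M$. In particular $F(r) \to +\infty$ as $\abs{r} \to \infty$, so $F$ is nonnegative outside a bounded interval. Taking $M = 1$ and combining $\abs{g} \leq F(g)$ on $\{\abs{g} \geq R_1\}$ with $\abs{g} \leq R_1$ on its complement, one sees immediately that every $g \in \cG$ lies in $L^1(\mu)$, so that the characterization above is applicable.

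For the main estimate, fix $\varepsilon \in \erre_+^\times$, set $M := N/\varepsilon$, choose $R_M$ as above, and put $c_0 := R_M$. For any $c \geq c_0$ and any $g \in \cG$, on the set $\{\abs{g} > c\} \subset \{\abs{g} \geq R_M\}$ one has both $\abs{g} \leq F(g)/M$ and $F(g) \geq M\abs{g} \geq 0$, whence
\[
  \int_{\{\abs{g} > c\}} \abs{g}\,d\mu
  \leq \frac{1}{M} \int_{\{\abs{g} > c\}} F(g)\,d\mu
  = \frac{1}{M} \int_{\{\abs{g} > c\}} \abs[\big]{F(g)}\,d\mu
  \leq \frac{1}{M}\,\norm[\big]{F(g)}_{L^1(\mu)} < \frac{N}{M} = \varepsilon ,
\]
where the middle equality uses that $F(g) \geq 0$ on the domain of integration. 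Since $\varepsilon$ was arbitrary and $c_0$ was chosen independently of $g$, this is exactly the required uniform vanishing of the tails.

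I do not expect a genuine obstacle here; the single point deserving care is the order of the quantifiers — the coercivity threshold $R_M$ must be fixed after, and as a function of, the target accuracy $\varepsilon$ — and it is precisely this that frees the argument from any convexity or monotonicity hypothesis on $F$, in contrast to the classical de la Vall\'ee Poussin formulation. The assumption that $F$ is bounded from below plays no role in the tail estimate itself, which rests solely on $1$-coercivity; it is a natural and harmless condition in the intended applications (where $F$ arises as a conjugate function) and guarantees, for instance, that the negative part of $F(g)$ is automatically $\mu$-integrable.
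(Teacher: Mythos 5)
Your proof is correct and rests on the same mechanism as the paper's: $1$-coercivity gives $F(r)\ge M\abs{r}$ for $\abs{r}\ge R_M$, so the uniform $L^1$-bound on $F(g)$ controls $\int_{\{\abs{g}>c\}}\abs{g}\,d\mu$ by $N/M$ uniformly in $g\in\cG$. The only cosmetic differences are that you verify the tail characterization of uniform integrability rather than the paper's ``bounded in $L^1$ plus uniformly absolutely continuous'' formulation, and that by integrating $\abs{F(g)}$ directly on the tail set you bypass the paper's preliminary reduction to the case $F\ge 0$.
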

\begin{proof}
  It suffices to show that \(\cG\) is bounded in \(L^1(\mu)\) and that
  for every \(\varepsilon \in \erre_+^\times\) there exist
  \(\delta \in \erre_+^\times\) such that, for every measurable set
  \(A\) with \(\mu(A)<\delta\) and every \(g \in \cG\), one has
  \(\mu(\abs{g} \ind{A}) < \varepsilon\).

  Let us first assume that \(F\) assumes only positive values.  Let
  \(M \in \erre_+^\times\). By the assumptions on \(F\) there exists
  \(R \in \erre_+^\times\) such that \(F(r) > M \abs{r}\) for every
  \(r \in \erre\) with \(\abs{r} > R\). Then, for every \(g \in \cG\)
  and \(A \in \cA\),
  \begin{align*}
    \int_A \abs{g}\,d\mu
    &= \int_{A \cap \{\abs{g}\leq R\}} \abs{g}\,d\mu
    + \int_{A \cap \{\abs{g}> R\}} \abs{g}\,d\mu\\
    & \leq R \mu(A)
    + \frac{1}{M} \int F(g)\,d\mu\\
    &\leq R m(A) + \frac{N}{M}.
  \end{align*}
  Choosing \(A = X\) it immediately follows that \(\cG\) is bounded in
  \(L^1(\mu)\).  Let \(\varepsilon \in \erre_+^\times\) and choose
  \(M\) such that \(N/M < \varepsilon/2\). Then for every
  \(A \in \cA\) with \(\mu(A) < \delta := \varepsilon/(2R)\) one has
  \(\mu(\abs{g}\ind{A}) < \varepsilon\), as desired.

  In order to consider the general case, assume that there exists
  \(\alpha \in \erre_+\) such that \(F \geq -\alpha\). Then
  \(F_\alpha := F + \alpha \geq 0\) and
  \(F_\alpha(r)/\abs{r} \to +\infty\) as \(\abs{r} \to \infty\), as
  well as
  \[
    \int \abs{F_\alpha(g)} \,d\mu = \int F_\alpha(g) \,d\mu
    = \int F(g) \,d\mu + \alpha\mu(X),
  \]
  where the right-hand side is bounded uniformly with respect to
  \(g \in \cG\). The conclusion thus follows from the case where \(F\)
  is positive.
\end{proof}

\subsection{A criterion for weak sequential compactness in \(L^1\)}
\label{ssec:wcL1}
The results in this subsection are adapted from
\cite{Bre-mm}. However, the statements are rearranged and the proofs
are slightly different (and the reference is not easily accessible),
so all details are provided.
\begin{lemma}
  \label{lm:MM.ae}
  Let \((X,\cA,\mu)\) be a finite complete measure space, \(H\) a
  Hilbert space, \(A \subset H \times H\) a maximal monotone graph,
  and \((x_n) \subset L^0(\mu;H)\), \((y_n) \subset L^1(\mu;H)\)
  sequences such that \((x_n,y_n) \in A\) \(\mu\)-a.e.\ for every
  \(n \in \bN\). If \(x_n \to x\) in \(L^0(\mu;H)\) and \(y_n \to y\)
  weakly in \(L^1(\mu;H)\), then \((x,y) \in A\) \(\mu\)-a.e.
\end{lemma}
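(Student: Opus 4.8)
The plan is to exploit the maximal monotonicity of $A$ through its characterization: a pair $(a,b) \in H \times H$ belongs to $A$ if and only if $\ip{a-\xi}{b-\eta} \geq 0$ for every $(\xi,\eta) \in A$. Since strongly measurable functions are essentially separably valued, the ranges of the sequences lie in a separable subspace, so one may fix a countable dense subset $D$ of $A$; it then suffices to prove that, for each fixed $(\xi,\eta) \in A$, the inequality $\ip{x-\xi}{y-\eta} \geq 0$ holds $\mu$-a.e., take the union over $D$ of the exceptional null sets, and invoke continuity together with maximality to conclude that $(x(\omega),y(\omega)) \in A$ for $\mu$-a.e.\ $\omega$.

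So fix $(\xi,\eta) \in A$. Since $(x_n,y_n) \in A$ $\mu$-a.e.\ and $A$ is monotone, $\ip{x_n - \xi}{y_n - \eta} \geq 0$ $\mu$-a.e. Passing to a subsequence (which changes neither $x$ nor the weak limit $y$), I may assume $x_n \to x$ $\mu$-a.e. I would then localize: by Egorov's theorem, for each $\delta > 0$ there is $X_\delta \in \cA$ with $\mu(X \setminus X_\delta) < \delta$ on which $x_n \to x$ uniformly, and for $M \in \erre_+^\times$ I set $E_M := X_\delta \cap \{\abs{x} \leq M\}$. On $E_M$ the functions $x_n$ are uniformly bounded for $n$ large and converge uniformly to $x$.

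The key step is to pass to the limit in $\int_{E_M \cap F} \ip{x_n - \xi}{y_n - \eta}\,d\mu \geq 0$ for an arbitrary $F \in \cA$. The term $\int_{E_M\cap F} \ip{\xi}{y_n}\,d\mu$ converges because $\ind{E_M \cap F}\xi \in L^\infty(\mu;H)$, $y_n \to y$ weakly in $L^1(\mu;H)$, and the dual of $L^1(\mu;H)$ is $L^\infty(\mu;H)$; the term $\int_{E_M\cap F}\ip{x_n}{\eta}\,d\mu$ converges by bounded convergence. The delicate term is $\int_{E_M \cap F} \ip{x_n}{y_n}\,d\mu$, the pairing of a sequence converging only in measure with one converging only weakly. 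I would handle it by splitting $\ip{x_n}{y_n} = \ip{x_n - x}{y_n} + \ip{x}{y_n}$: the second part converges since $\ind{E_M \cap F} x \in L^\infty(\mu;H)$, while the first is bounded in absolute value by $\norm{x_n - x}_{L^\infty(E_M)}\,\norm{y_n}_{L^1(\mu;H)}$, which tends to $0$ because $x_n \to x$ uniformly on $E_M$ and $\sup_n \norm{y_n}_{L^1(\mu;H)}$ is finite (weakly convergent sequences in $L^1$ being norm bounded). This uniform $L^1$-bound on $(y_n)$, combined with the uniform convergence supplied by Egorov, is precisely what compensates for the absence of strong convergence, and is the main obstacle of the argument.

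Passing to the limit then gives $\int_{E_M \cap F} \ip{x - \xi}{y - \eta}\,d\mu \geq 0$ for every $F \in \cA$; since the integrand lies in $L^1(E_M)$, taking $F = E_M \cap \{\ip{x-\xi}{y-\eta} < 0\}$ forces it to be $\geq 0$ $\mu$-a.e.\ on $E_M$. Letting $M \to \infty$ along a sequence and $\delta \to 0$ so that the sets $E_M$ exhaust $X$ up to a $\mu$-null set (using that $x$ is finite $\mu$-a.e.), I obtain $\ip{x-\xi}{y-\eta} \geq 0$ $\mu$-a.e.\ on $X$. Finally, forming the union over the countable dense set $D$ and invoking maximality yields $(x,y) \in A$ $\mu$-a.e.
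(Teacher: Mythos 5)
Your proof is correct and follows essentially the same route as the paper's: reduce via a subsequence, Egorov, and truncation to uniform convergence of \((x_n)\) on sets exhausting \(X\), then pass to the limit in the monotonicity inequality by pairing the strong \(L^\infty\) convergence of \((x_n)\) against the weak \(L^1\) convergence of \((y_n)\), and conclude by maximality. The only differences are presentational (you spell out the strong--weak pairing limit and the countable-dense-subset step explicitly, and test against indicators rather than general nonnegative \(L^\infty\) functions).
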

\begin{proof}
  Passing to a subsequence, let us assume that \(x_n \to x\)
  \(\mu\)-a.e.  This comes at no loss of generality because, after the
  relabeling, one still has that \(y_n \to y\) weakly in
  \(L^1(\mu;H)\) and \((x_n,y_n) \in A\) \(\mu\)-a.e.\ for every
  \(n\).
  For \(N \in \bN\), set \(X_N := \{\abs{x} \leq N\}\). Then
  \(X_N \uparrow X\). So it is enough to prove the claim on every
  \(X_N\). Invoking the Severini-Egorov theorem, it is in fact enough
  to prove the claim assuming that \(x_n \to x\) in
  \(L^\infty(\mu;H)\). Let \((w,g) \in A\) and
  \(\phi \in L^\infty_+(\mu)\). Then
  \({\ip{x_n - w}{y_n - g}}_H\phi \geq 0\) \(\mu\)-a.e., hence,
  denoting the duality pairing of \(L^1(\mu;H)\) and
  \(L^\infty(\mu;H)\) by \(\ip{\cdot}{\cdot}\),
  \[
    0 \leq \int_X {\ip{x_n - w}{y_n - g}}_H\phi \,d\mu
    = \ip[\big]{\phi(x_n - w)}{y_n - g},
  \]
  where, as \(x_n\phi \to x\phi\) in \(L^\infty(\mu;H)\),
  \[
    \lim_{n \to \infty} \ip[\big]{\phi(x_n - w)}{y_n - g} =
    \ip[\big]{\phi(x - w)}{y - g} \geq 0,
  \]
  that is,
  \[
    \int_X {\ip{x - w}{y - g}}_H\phi \,d\mu \geq 0.
  \]
  Since \(w,g,\phi\) are arbitrary, this implies that
  \(\ip{x - w}{y - g}_H \geq 0\) \(\mu\)-a.e.\ for every
  \((w,g) \in A\), hence, by maximality of \(A\), \((x,y) \in A\)
  \(\mu\)-a.e.
\end{proof}

\begin{lemma}
  \label{lm:Brz}
  Let \((X,\cA,\mu)\) be a finite complete measure space and
  \(\gamma\) a maximal monotone graph in \(\erre \times \erre\) with
  \(\dom(\gamma) = \erre\) and \((0,0) \in \gamma\). If the sequences
  of functions \((x_n), (y_n) \subset L^0(\mu)\) are such that
  \((x_n,y_n) \in \gamma\) \(\mu\)-a.e.\ for every \(n \in \bN\) and
  \((x_n y_n)\) is bounded in \(L^1(\mu)\), then \((y_n)\) is
  relatively weakly sequentially compact in \(L^1(\mu)\).
  Furthermore, if \(x_n \to x\) in \(L^0(\mu)\), then, denoting a weak
  limit of a subsequence of \((y_n)\) by \(y\), one has
  \((x,y) \in \gamma\) \(\mu\)-a.e.
\end{lemma}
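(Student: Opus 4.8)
The plan is to realize \(\gamma\) as the subdifferential of a convex potential and then to control \((y_n)\) through the convex conjugate, reducing the two assertions to Proposition~\ref{prop:dlVPext} and Lemma~\ref{lm:MM.ae} respectively. First I would invoke the one-dimensional structure theorem for maximal monotone graphs: since \(\gamma\) is maximal monotone in \(\erre \times \erre\), there is a proper lower semicontinuous convex function \(j\colon \erre \to \overline{\erre}\) with \(\gamma = \partial j\). Because \(\dom(\gamma) = \dom(\partial j) = \erre\) and \(\dom(\partial j) \subseteq \dom(j)\), the function \(j\) is finite, hence continuous, on all of \(\erre\); after normalizing by an additive constant, \((0,0) \in \gamma\) means \(0 \in \partial j(0)\), so \(0\) minimizes \(j\) and we may take \(j(0) = 0\), whence \(j \geq 0\), \(j^\ast \geq 0\), and \(j^\ast(0) = 0\). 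Since \(j\) is finite everywhere, the fact recalled in \S\ref{ssec:coac} shows that \(j^\ast\) is 1-coercive.

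For the relative weak compactness I would pass through uniform integrability and the Dunford--Pettis theorem. The relation \((x_n, y_n) \in \gamma = \partial j\) a.e.\ yields, by the Young equality, \(x_n y_n = j(x_n) + j^\ast(y_n)\) a.e., while monotonicity of \(\gamma\) applied to the pairs \((x_n,y_n)\) and \((0,0)\) gives \(x_n y_n \geq 0\) a.e. Combined with \(j \geq 0\), this produces the pointwise bound \(0 \leq j^\ast(y_n) \leq x_n y_n\), so that, the family \((x_n y_n)\) being nonnegative and bounded in \(L^1(\mu)\),
\[
  \int j^\ast(y_n)\,d\mu \leq \int x_n y_n\,d\mu \leq N.
\]
Proposition~\ref{prop:dlVPext} applied to \(F = j^\ast\) then furnishes uniform integrability of \((y_n)\); in particular each \(y_n \in L^1(\mu)\), and relative weak sequential compactness in \(L^1(\mu)\) follows from Dunford--Pettis.

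For the identification of the limit, assuming \(x_n \to x\) in \(L^0(\mu)\), I would select a subsequence \((y_{n_k})\) converging weakly in \(L^1(\mu)\) to some \(y\) (possible by the first part), observe that along it \(x_{n_k} \to x\) in \(L^0(\mu)\) and \((x_{n_k}, y_{n_k}) \in \gamma\) a.e.\ continue to hold, and conclude \((x,y) \in \gamma\) a.e.\ directly from Lemma~\ref{lm:MM.ae} with \(H = \erre\) and \(A = \gamma\).

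The step I expect to require the most care is the application of Proposition~\ref{prop:dlVPext}, because \(j^\ast\) need not be finite-valued: when \(\operatorname{ran}(\gamma)\) is a proper subset of \(\erre\) one has \(j^\ast = +\infty\) outside \(\overline{\operatorname{ran}(\gamma)}\), which falls outside the literal hypotheses of the proposition. I would dispose of this by noting that its proof uses only that \(F\) is bounded below, that each \(F(g)\) is integrable, and that \(F(r)/\abs{r} \to +\infty\) as \(\abs{r} \to \infty\) (a condition satisfied trivially wherever \(F = +\infty\)); hence the criterion applies verbatim to the extended-real-valued \(F = j^\ast\), the integrability of \(j^\ast(y_n)\) being guaranteed by the displayed bound.
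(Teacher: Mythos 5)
Your proposal is correct and follows essentially the same route as the paper's proof: write \(\gamma = \partial G\) for a nonnegative convex potential vanishing at \(0\), use the Young equality \(x_ny_n = G(x_n)+G^\ast(y_n)\) together with \(x_ny_n \geq 0\) to bound \(\int G^\ast(y_n)\,d\mu\), apply Proposition~\ref{prop:dlVPext} and Dunford--Pettis for the weak compactness, and Lemma~\ref{lm:MM.ae} for the identification of the limit. Your closing remark about \(G^\ast\) possibly taking the value \(+\infty\) outside \(\overline{\operatorname{ran}(\gamma)}\) is a legitimate point of care that the paper's proof passes over silently, and your resolution of it is sound.
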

\begin{proof}
  Let \(G\colon \erre \to \erre_+\) be a convex function with
  \(G(0)=0\) such that \(\gamma = \partial G\). The condition
  \(\dom(\gamma) = \erre\) implies that
  \(G^\ast\colon \erre \to \erre_+\), the convex conjugate of \(G\),
  is such that
  \(\lim_{\abs{r} \to +\infty} G^\ast(r)/\abs{r} = +\infty\).
  Recalling that, for any \(x,y \in \erre\), \(xy=G(x)+G^\ast(y)\) if
  and only if \(y \in \partial G(x) = \gamma(x)\), the assumptions
  imply that \((G^\ast(y_n))\) is bounded in \(L^1(\mu)\). Proposition
  \ref{prop:dlVPext} then yields the uniform integrability of
  \((y_n)\), that is thus relatively weakly sequentially compact in
  \(L^1(\mu)\) by the Dunford-Pettis theorem.
  Lemma \ref{lm:MM.ae} finally implies that
  \((x,y) \in \gamma\) \(\mu\)-a.e.
\end{proof}

\begin{coroll}
  \label{cor:Brz}
  Let \((X,\cA,\mu)\) and \(\gamma\) be as in Lemma \ref{lm:Brz}.  If
  the sequences of functions \((x_n) \subset L^0(\mu)\),
  \((y_n) \in L^1(\mu)\) are such that \((x_n,y_n) \in \gamma\)
  \(\mu\)-a.e.\ for every \(n \in \bN\) and
  \[
    \int x_ny_n \,d\mu \lesssim 1 + \int \abs{y_n} \,d\mu,
  \]
  then \((y_n)\) is relatively weakly compact in \(L^1(\mu)\).
  Furthermore, if \(x_n \to x\) in \(L^0(\mu)\) and \(y\) is an
  adherent point of \((y_n)\), then \((x,y) \in \gamma\)
  \(\mu\)-a.e.
\end{coroll}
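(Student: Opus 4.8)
The plan is to reduce the statement to Lemma~\ref{lm:Brz} by absorbing the term \(\int \abs{y_n}\,d\mu\) appearing on the right-hand side of the hypothesis, thereby upgrading it into a genuine \(L^1\)-bound on the sequence \((x_ny_n)\). To this end I retain the notation of the proof of Lemma~\ref{lm:Brz}: let \(G\colon \erre \to \erre_+\) be convex with \(G(0)=0\) and \(\gamma = \partial G\), and recall that its conjugate \(G^\ast \geq 0\) is \(1\)-coercive. Since \((0,0) \in \gamma\) and \(\gamma\) is monotone, \(x_ny_n \geq 0\) \(\mu\)-a.e.; moreover, as \((x_n,y_n) \in \gamma\) \(\mu\)-a.e.\ and \(G \geq 0\), one has \(x_ny_n = G(x_n) + G^\ast(y_n) \geq G^\ast(y_n)\), whence \(\int G^\ast(y_n)\,d\mu \leq \int x_ny_n\,d\mu\) for every \(n\).

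The crucial step is to bound \(\int \abs{y_n}\,d\mu\) uniformly in \(n\). Let \(N \in \erre_+^\times\) be the constant implicit in the hypothesis, so that \(\int x_ny_n\,d\mu \leq N\bigl(1 + \int \abs{y_n}\,d\mu\bigr)\). Fix \(M > N\) and, using the \(1\)-coercivity of \(G^\ast\), choose \(R \in \erre_+^\times\) such that \(G^\ast(r) \geq M\abs{r}\) whenever \(\abs{r} > R\). Splitting the integral of \(\abs{y_n}\) over \(\{\abs{y_n}\leq R\}\) and its complement, and combining the two preceding inequalities, gives
\[
  \int \abs{y_n}\,d\mu \leq R\mu(X) + \frac{1}{M}\int G^\ast(y_n)\,d\mu
  \leq R\mu(X) + \frac{N}{M}\Bigl(1 + \int \abs{y_n}\,d\mu\Bigr).
\]
Since \(N/M < 1\), the last term can be absorbed into the left-hand side, yielding a bound on \(\int \abs{y_n}\,d\mu\) that is independent of \(n\). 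This absorption is the one genuinely delicate point of the argument, and it hinges precisely on the \(1\)-coercivity of \(G^\ast\), which is what forces the coefficient \(N/M\) to be made strictly smaller than one.

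With this uniform bound in hand the hypothesis becomes \(\int \abs{x_ny_n}\,d\mu = \int x_ny_n\,d\mu \lesssim 1\), i.e.\ \((x_ny_n)\) is bounded in \(L^1(\mu)\), which is exactly the hypothesis of Lemma~\ref{lm:Brz}. That lemma then yields the relative weak sequential compactness of \((y_n)\) in \(L^1(\mu)\); by the Eberlein--\v{S}mulian theorem this coincides with relative weak compactness, proving the first assertion. (Alternatively, one may bypass the reduction and apply Proposition~\ref{prop:dlVPext} directly to \(F = G^\ast\), whose \(L^1(\mu)\)-norms along \((y_n)\) are now bounded, to obtain uniform integrability and hence, by Dunford--Pettis, relative weak compactness.) For the second assertion, \((y_n)\) lies in a relatively weakly compact subset of \(L^1(\mu)\), on which the weak topology is angelic; consequently any adherent point \(y\) of \((y_n)\) is the weak limit of some subsequence \((y_{n_k})\). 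Since \(x_{n_k} \to x\) in \(L^0(\mu)\) as well, the second part of Lemma~\ref{lm:Brz} applies and gives \((x,y) \in \gamma\) \(\mu\)-a.e.
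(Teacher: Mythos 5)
Your proof is correct, and it reaches the same conclusion by a slightly different route than the paper. The paper starts from the same Fenchel identity, obtaining \(\int G(x_n)\,d\mu + \int G^\ast(y_n)\,d\mu \leq N_1 + N_2\int\abs{y_n}\,d\mu\), but then \emph{tilts} the coercive function, setting \(\Phi := G^\ast - N_2\abs{\cdot}\) so that \(\int\Phi(y_n)\,d\mu < N_1\); since \(\Phi\) is still bounded below and \(1\)-coercive, Proposition~\ref{prop:dlVPext} applies directly and yields uniform integrability without ever producing an explicit bound on \(\norm{y_n}_{L^1(\mu)}\). You instead make that bound explicit by an absorption argument (split over \(\{\abs{y_n}\leq R\}\) and its complement, use \(G^\ast(r)\geq M\abs{r}\) for \(\abs{r}>R\) with \(M>N\), and absorb \((N/M)\int\abs{y_n}\,d\mu\) into the left-hand side), after which the hypothesis of Lemma~\ref{lm:Brz} holds verbatim and the corollary reduces to it. Your splitting computation is essentially the proof of Proposition~\ref{prop:dlVPext} rerun by hand, so the two arguments rest on the same fact --- \(1\)-coercivity of \(G^\ast\) lets it eventually dominate any linear growth --- and both crucially use the standing assumption \((y_n)\subset L^1(\mu)\) (you, to justify subtracting \((N/M)\int\abs{y_n}\,d\mu\) from a finite quantity; the paper, to ensure the right-hand side of its inequality is finite, as its subsequent remark points out). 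Your handling of the second assertion via angelicity of weakly compact sets is also fine and matches the role played by Lemma~\ref{lm:MM.ae} in the paper. The net trade-off: your version is more self-contained and yields the quantitative \(L^1\)-bound on \((y_n)\) as a by-product; the paper's is shorter because the tilted function \(\Phi\) lets the already-proved proposition do the absorption implicitly.
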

\begin{proof}
  By a reasoning entirely analogous to the one used in the proof of
  Lemma \ref{lm:Brz}, there exist \(N_1, N_2 \in \erre_+\) such that,
  for every \(n \in \bN\),
  \begin{equation}
    \label{eq:gigien}
    \int G(x_n)\,d\mu + \int G^\ast(y_n)\,d\mu \leq N_1
    + N_2 \int \abs{y_n}\,d\mu.    
  \end{equation}
  Setting \(\Phi := G^\ast - N_2 \abs{\cdot}\), it follows that for
  every \(n \in \bN\)
  \[
    \int \Phi(y_n)\,d\mu < N_1,
  \]
  where \(\Phi\) is continuous, bounded from below, and
  \(1\)-coercive. The proof is completed appealing to Proposition
  \ref{prop:dlVPext}, the Dunford-Pettis criterion, and Lemma
  \ref{lm:MM.ae}.
\end{proof}
\begin{rmk}
  One needs to assume from the onset that \((y_n) \subset L^1(\mu)\),
  rather than just in \(L^0(\mu)\) as in Lemma \ref{lm:Brz}, otherwise
  the right-hand side of \eqref{eq:gigien} may be infinite.
\end{rmk}

\subsection{Weak continuity of vector-valued functions}
If \(E\) is a Banach space with dual \(E'\) and \(T \in \erre_+\),
recall that a function \(f\colon [0,T] \to E\) is called weakly
continuous if, for every \(\zeta \in E'\),
\(\ip{\zeta}{f}\colon [0,T] \to \erre\) is continuous. The space of
such weakly continuous functions, modulo the set of functions equal to
zero a.e.\ on \([0,T]\), is denoted by \(C_w([0,T];E)\).

The proof of the following results can be found in \cite{Strauss} (or
in \cite{cm:semimg}).
\begin{lemma}
  \label{lm:Strauss}
  Let \(T \in \erre_+\). If \(E\) is a reflexive Banach space that is
  densely and continuously embedded in a Banach space \(F\), then
  \[
    C_w([0,T];F) \cap L^\infty(0,T;E) = C_w([0,T];E).
  \]
\end{lemma}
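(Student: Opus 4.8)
The plan is to prove the two inclusions separately. The single technical device underlying both is that the continuous embedding $E \embed F$ induces a restriction map $F' \to E'$, $\eta \mapsto \eta|_E$, so that a functional on $F$ always tests against $E$-valued objects as well.

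For the inclusion $C_w([0,T];E) \subseteq C_w([0,T];F) \cap L^\infty(0,T;E)$, I would take $f \in C_w([0,T];E)$ and argue as follows. The image $f([0,T])$ is compact in the weak topology, being the continuous image of the compact set $[0,T]$, hence weakly bounded; by the uniform boundedness principle (applied in $E'$, with $E$ canonically in $E''$) this forces $\sup_{t}\norm{f(t)}_E < \infty$. Strong measurability follows from the Pettis theorem: $f$ is weakly measurable, and it is essentially separably valued because $f([0,T])$ is weakly separable (continuous image of a separable set) and, by Mazur's theorem, its norm-closed linear span equals its weakly closed linear span and is therefore separable. Thus $f \in L^\infty(0,T;E)$. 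Finally, for $\eta \in F'$ one has $\ip{\eta}{f(t)} = \ip{\eta|_E}{f(t)}$, which is continuous in $t$, so $f \in C_w([0,T];F)$.

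For the reverse inclusion, let $f \in C_w([0,T];F) \cap L^\infty(0,T;E)$ and set $M := \norm{f}_{L^\infty(0,T;E)}$. The first and most delicate step is to upgrade the essential bound in $E$ to a pointwise one, namely to show that $f(t) \in E$ with $\norm{f(t)}_E \leq M$ for every $t \in [0,T]$. Fixing $t_0$ and choosing $t_n \to t_0$ within the conull set on which $\norm{f(t_n)}_E \leq M$, reflexivity of $E$ (via Eberlein--\v{S}mulian) yields a subsequence with $f(t_{n_k}) \rightharpoonup g$ weakly in $E$, where $\norm{g}_E \leq M$ by weak lower semicontinuity of the norm. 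Weak convergence in $E$ implies weak convergence in $F$ (again by restriction of functionals), while $F$-weak continuity gives $f(t_{n_k}) \rightharpoonup f(t_0)$ in $F$; uniqueness of weak limits in $F$ together with injectivity of the embedding then force $g = f(t_0)$, proving the claim.

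With the uniform pointwise bound in hand, weak continuity into $E$ follows by a scalar subsequence argument. Fix $\zeta \in E'$ and $t_n \to t_0$; since $(f(t_n))$ is now bounded in the reflexive space $E$, every subsequence admits a further subsequence along which $f$ converges weakly in $E$, necessarily to $f(t_0)$ by the cross-space identification of limits used above. Hence every subsequence of the real sequence $(\ip{\zeta}{f(t_n)})$ has a sub-subsequence converging to $\ip{\zeta}{f(t_0)}$, so the whole sequence converges; this gives continuity of $t \mapsto \ip{\zeta}{f(t)}$ and thus $f \in C_w([0,T];E)$. I expect the main obstacle to be precisely the passage from the essential $L^\infty$ bound to an everywhere-defined $E$-valued representative, i.e.\ the claim of the third paragraph: it is what genuinely uses reflexivity and the identification of weak limits across $E$ and $F$, and it is also where one must be careful that pointwise statements are made about the weakly continuous $F$-representative of $f$.
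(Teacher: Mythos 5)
Your proof is correct, and it is essentially the classical argument of Strauss to which the paper defers (the paper itself gives no proof of Lemma \ref{lm:Strauss}, citing \cite{Strauss} and \cite{cm:semimg}): the key steps --- upgrading the essential $E$-bound to a pointwise one by extracting weak limits in the reflexive space $E$ and identifying them with the $F$-weak limits supplied by weak continuity in $F$, then concluding weak $E$-continuity by a subsequence argument --- are exactly the standard route. Your care in distinguishing the weakly $F$-continuous representative from the $L^\infty(0,T;E)$ representative, and in using density of the conull set, is the right place to be careful; no gaps.
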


\subsection{On compactness of deterministic convolutions}
\label{ssec:dcc}
An important tool for the proof of the main result is Theorem
\ref{thm:dcc} below. Throughout this subsection \(S\) is a strongly
continuous compact semigroup on a Banach space \(E\) and \(\Gamma\) is
the map
\begin{align*}
  \Gamma\colon L^1_\loc(\erre_+;E) &\longrightarrow C(\erre_+;E)\\
  \phi &\longmapsto S \ast \phi.
\end{align*}
The following version of the Ascoli-Arzel\`a criterion will be used
(cf., e.g., \cite[TG, X, p. 18, corollaire 2]{Bbk}): Let
\(I \subset \erre\) be a compact interval. A set
\(H \subseteq C(I;E)\) is relatively compact if and only if \(H\) is
equicontinuous and pointwise relatively compact in \(E\).
\begin{thm}
  \label{thm:dcc}
  If \(T \in \erre_+^\times\) and \(H\) is a uniformly integrable
  subset of \(L^1(0,T;E)\), then \(\Gamma(H)\) is relatively compact
  in \(C([0,T];E)\).
\end{thm}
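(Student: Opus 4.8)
The plan is to apply the stated Ascoli--Arzel\`a criterion to \(\Gamma(H) \subseteq C([0,T];E)\): it suffices to show that \(\Gamma(H)\) is pointwise relatively compact in \(E\) and equicontinuous on \([0,T]\). Three facts will be used throughout. By strong continuity, \(M := \sup_{r \in [0,2T]} \norm{S(r)}\) is finite, so that \(\norm{\Gamma(\phi)(t)} \leq M \norm{\phi}_{L^1(0,T;E)}\) for every \(t \in [0,T]\); setting \(C := \sup_{\phi \in H} \norm{\phi}_{L^1(0,T;E)}\), which is finite since uniform integrability entails boundedness in \(L^1\), the family \(\Gamma(H)\) is bounded in \(C([0,T];E)\). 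Since \(S\) is compact, \(S(r)\) is a compact operator for every \(r \in \erre_+^\times\) and \(r \mapsto S(r)\) is continuous in the uniform operator topology on \(\erre_+^\times\), hence uniformly continuous on every compact subinterval of \(\erre_+^\times\). Finally, write \(\omega(\delta) := \sup_{\phi \in H} \sup\bigl\{ \int_A \norm{\phi(s)}\,ds : A \subseteq [0,T],\ \abs{A} \leq \delta \bigr\}\), so that uniform integrability of \(H\) means precisely \(\omega(\delta) \to 0\) as \(\delta \to 0\).

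For pointwise relative compactness, fix \(t \in \mathopen]0,T\mathclose]\) (the case \(t=0\) is trivial, as \(\Gamma(\phi)(0)=0\)) and \(\varepsilon \in \mathopen]0,t\mathclose[\). Using the semigroup property \(S(t-s)=S(\varepsilon)S(t-\varepsilon-s)\) for \(s \leq t-\varepsilon\) and the commutation of \(S(\varepsilon)\) with the Bochner integral,
\[
  \Gamma(\phi)(t) = S(\varepsilon)\,\Gamma(\phi)(t-\varepsilon)
  + \int_{t-\varepsilon}^t S(t-s)\phi(s)\,ds .
\]
The first term lies in the relatively compact set \(S(\varepsilon)\bigl(\overline{B}_{MC}\bigr)\), the image under the compact operator \(S(\varepsilon)\) of the closed ball of radius \(MC\) (recall \(\norm{\Gamma(\phi)(t-\varepsilon)} \leq MC\)); the second has norm at most \(M\,\omega(\varepsilon)\), uniformly in \(\phi \in H\). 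Thus \(\{\Gamma(\phi)(t) : \phi \in H\}\) is contained in an \(M\omega(\varepsilon)\)-neighbourhood of a relatively compact set; covering the latter by finitely many balls and letting \(\varepsilon \downarrow 0\) shows this family is totally bounded, hence relatively compact since \(E\) is complete.

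For equicontinuity, take \(0 \leq t' < t \leq T\) and \(\varepsilon \in \mathopen]0,t'\mathclose[\), and decompose \(\Gamma(\phi)(t) - \Gamma(\phi)(t') = I_1 + I_2 + I_3\), where \(I_1 = \int_{t'}^{t} S(t-s)\phi(s)\,ds\), \(I_2 = \int_{t'-\varepsilon}^{t'} \bigl(S(t-s)-S(t'-s)\bigr)\phi(s)\,ds\), and \(I_3 = \int_0^{t'-\varepsilon} \bigl(S(t-s)-S(t'-s)\bigr)\phi(s)\,ds\). Bounding \(\norm{S(\cdot)} \leq M\) and using uniform integrability on the sets \([t',t]\) and \([t'-\varepsilon,t']\) gives \(\norm{I_1} \leq M\,\omega(t-t')\) and \(\norm{I_2} \leq 2M\,\omega(\varepsilon)\), both uniform in \(\phi \in H\). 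For \(I_3\), the variable \(s\) ranges over \([0,t'-\varepsilon]\), so both \(t-s\) and \(t'-s\) lie in \([\varepsilon,2T]\); by uniform continuity of \(r \mapsto S(r)\) in operator norm on \([\varepsilon,2T]\), for each \(\eta \in \erre_+^\times\) there is \(\delta \in \erre_+^\times\) with \(\norm{S(t-s)-S(t'-s)} \leq \eta\) whenever \(t-t' \leq \delta\), whence \(\norm{I_3} \leq \eta\,C\). Given any target tolerance, one first chooses \(\varepsilon\) to control \(I_2\), then \(\eta\) (hence \(\delta\)) to control \(I_1\) and \(I_3\), obtaining equicontinuity uniformly over \(H\); together with pointwise relative compactness, Ascoli--Arzel\`a yields the claim. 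The only delicate point is that \(r \mapsto S(r)\) need not be norm-continuous at \(r=0\), so the estimates on \(S(t-s)-S(t'-s)\) cannot be applied up to \(s=t'\); this is precisely why the strips \([t',t]\) and \([t'-\varepsilon,t']\) are excised and absorbed through uniform integrability rather than through continuity of the semigroup.
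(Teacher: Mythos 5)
Your proof is correct, and it reaches the conclusion by a genuinely different route in one of the two halves. For pointwise relative compactness you use the same decomposition \(\Gamma\phi(t) = S(a)\Gamma\phi(t-a) + \int_{t-a}^t S(t-s)\phi(s)\,ds\) as the paper, but you conclude directly by total boundedness (the set \(K(t)\) lies within \(M\omega(a)\) of the relatively compact set \(S(a)(\overline{B}_{MC})\)), whereas the paper packages the same idea functional-analytically, building the auxiliary Banach space \(F\) generated by the balanced convex hull of \(H\) and exhibiting \(\Gamma(t)\) as an operator-norm limit of compact operators \(C_a\colon F \to E\); your version is more elementary and avoids the gauge-norm construction. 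The real divergence is in the equicontinuity step: the paper uses only strong continuity plus the already-proved pointwise relative compactness, covering \(K(t_0)\) by finitely many \(\varepsilon/(6N)\)-balls and reducing to the equicontinuity of finitely many continuous functions \(k_i\); you instead invoke the fact that a compact \(C_0\)-semigroup is continuous in the uniform operator topology on \(\erre_+^\times\), which lets you estimate \(\norm{S(t-s)-S(t'-s)}\) directly on the bulk of the integration interval. That fact is true and standard (it is the classical theorem that compactness of \(S(r)\) for all \(r>0\) implies norm continuity for \(r>0\), cf.\ Pazy), but it is an extra ingredient that deserves an explicit citation or a two-line proof, since the paper's argument deliberately gets by without it. Two minor points to tidy up: your equicontinuity decomposition requires \(\varepsilon < t'\), so the case of \(t'\) smaller than the chosen \(\varepsilon\) (in particular \(t'=0\)) must be handled separately, which is immediate from \(\norm{\Gamma\phi(t)-\Gamma\phi(t')} \leq \norm{\Gamma\phi(t)} + \norm{\Gamma\phi(t')} \leq 2M\omega(\varepsilon+\delta)\); and your statement that uniform integrability ``means precisely'' \(\omega(\delta)\to 0\) should be read together with \(L^1\)-boundedness, which you do assert separately, since the \(\varepsilon\)--\(\delta\) condition alone does not imply boundedness.
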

\begin{proof}
  In view of the Ascoli-Arzel\`a criterion, it suffices to prove that
  \(K := \Gamma(H)\) is equicontinuous and pointwise relatively
  compact in \(E\) on \([0,T]\).
  Let us prove that \(K\) is pointwise relatively compact in \(E\) on
  \([0,T]\). It is evident that \(K(0)\) is a singleton, hence it is
  compact in (the separated space) \(E\). Let
  \(t \in \mathopen]0,T\mathclose]\) and
  \(a \in \mathopen]0,t\mathclose[\). One has, for every \(f \in H\),
  \begin{align*}
    \Gamma f(t)
    &= \int_0^{t-a} S(t-s)f(s)\,ds + \int_{t-a}^t S(t-s)f(s)\,ds\\
    &= S(a) \int_0^{t-a} S(t-a-s)f(s)\,ds
      + \int_{t-a}^t S(t-s)f(s)\,ds\\
    &= S(a) \Gamma f(t-a) + \int_{t-a}^t S(t-s)f(s)\,ds
  \end{align*}
  Let \(H_1\) be the closure of the balanced convex envelope of \(H\),
  and \(F\) be the vector space generated by \(H_1\), endowed with the
  norm defined by the gauge function of \(H_1\). Then \(H_1\) is the
  closed unit ball of the complete normed space \(F\) (see, e.g.,
  \cite[EVT, II, p. 28]{Bbk}). Moreover, it is a simple matter to show
  that \(H_1\) inherits uniform integrability from \(H\).  Introduce
  the linear continuous operator
  \begin{align*}
    C_a \colon F &\longrightarrow E\\
    f &\longmapsto S(a) \Gamma f(t-a)
  \end{align*}
  Since \(S(a)\) is a compact endomorphism of \(E\) and \(K(t-a)\) is
  a bounded subset of \(E\), it follows that \(C_a\) is a compact
  operator from \(F\) to \(E\). Let us then show that \(C_a\) tends to
  \(\Gamma(t)\) in \(\cL(F;E)\) as \(a \to 0\). In fact,
  \[
    \norm[\bigg]{\int_{t-a}^t S(t-s)f(s)\,ds}_E
    \leq Me^{\alpha T} \int_{t-a}^t {\norm{f(s)}}_E\,ds,
  \]
  hence, by the uniform integrability of \(H_1\), that coincides with
  the unit ball \(B\) of \(F\),
  \[
    \lim_{a \to 0} \norm[\big]{C_a - \Gamma(t)}_{\cL(F;E)}
    = \lim_{a \to 0} \sup_{f \in B}
    \norm[\bigg]{\int_{t-a}^t S(t-s)f(s)\,ds}_E = 0.
  \]
  Recalling that \(\cL^c(F;E)\) is closed in \(\cL(F;E)\) (see, e.g.,
  \cite[TS, III, p. 4, proposition 2]{Bbk}, this implies that
  \(\Gamma(t) \in \cL^c(F;E)\), hence that \(K(t)\) is relatively
  compact in \(E\). Since \(t\) was arbitrary, we conclude that
  \(K = \Gamma(H)\) is pointwise relatively compact in \(E\) on
  \([0,T]\).

  Let us now show that \(K=\Gamma(H)\) is equicontinuous, which will
  conclude the proof. Let \(t \in [0,T]\) and
  \(\varepsilon \in \erre_+^\times\). We are going to show that there
  exists a neighborhood \(U \subseteq [0,T]\) of \(t\) such that
  \(\norm{k(t)-k(s)} < \varepsilon\) for every \(k \in K\) and every
  \(s \in U\).
  For every \(g,k \in K\) and every \(s \in [0,T]\), the triangle
  inequality yields
  \begin{equation}
    \label{eq:Gfth}
    \norm{k(t) - k(s)} \leq \norm{k(t) - g(t)}
    + \norm{g(t) - g(s)}
    + \norm{g(s) - k(s)}.
  \end{equation}
  For every \(t_0,s \in [0,T]\) with \(t_0 \leq s\) and
  \(k = \Gamma f\), \(f \in H\), one has
  \begin{align*}
    k(s) = \Gamma f(s)
    &= \int_0^s S(s-u) f(u)\,du\\
    &= S(s-t_0) \int_0^{t_0} S(t_0-u) f(u)\,du
      + \int_{t_0}^s S(s-u)f(u)\,du\\
    &= S(s-t_0) k(t_0)
      + \int_{t_0}^s S(s-u)f(u)\,du.
  \end{align*}
  Therefore, if \(g = \Gamma\phi\), \(\phi \in H\),
  \[
    k(s) - g(s) = S(s-t_0)(k(t_0) - g(t_0))
    + \int_{t_0}^s S(s-u)(f(u) - \phi(u))\,du.
  \]
  Setting \(N := Me^{\alpha T}\), one has
  \[
    \norm[\big]{S(s-t_0) \bigl( k(t_0) - g(t_0) \bigr)}
    < N \norm[\big]{k(t_0) - g(t_0)}
  \]
  and
  \[
    \norm[\bigg]{\int_{t_0}^s S(s-u) \bigl(%
      f(u) - \phi(u) \bigr)\,du} \leq N \biggl(
    \int_{t_0}^s \norm{f(u)}\,du + \int_{t_0}^s \norm{\phi(u)}\,du \biggr).
  \]
  The uniform integrability of \(H\) implies that there exists
  \(\eta \in \erre_+^\times\), depending on \(\varepsilon\) only, such
  that for every measurable \(B \subseteq [0,T]\) with
  \(\operatorname{Leb}(B)<\eta\) and every \(f \in H\) one has
  \(\operatorname{Leb}(\norm{f}\ind{B}) < \varepsilon / (12N)\) (the
  reason for the choice of this upper bound will be clear soon). Let
  \(U_0\) be an open interval (in the relative topology of \([0,T]\))
  containing \(t\) and \(t_0 \in [0,\inf U_0]\) be such that
  \(\sup U_0 - t_0 < \eta\). Then for every \(f,\phi \in H\) and
  \(s \in U_0\) one has
  \[
    \norm[\bigg]{\int_{t_0}^s S(s-u) \bigl(%
      f(u) - \phi(u) \bigr)\,du} < \frac{\varepsilon}{6}.
  \]
  Since the closure of \(K(t_0)\) in \(E\) is compact by assumption,
  there exists a finite open cover of \(K(t_0)\) consisting of open
  balls with radius \(\varepsilon/(6N)\), centered around points
  \(k_i(t_0) = \Gamma f_i(t_0)\), \(i=1,\ldots,n\). That is, for every
  \(k \in K\) there exists \(i \in \{1,\ldots,n\}\) such that
  \(\norm{k(t_0)-k_i(t_0)} < \varepsilon/(6N)\), hence also, for every
  \(s \in U_0\),
  \[
    \norm{k(s) - k_i(s)} < \frac{\varepsilon}{3},
  \]
  as well as, by \eqref{eq:Gfth}, (note that obviously \(t \in U_0\))
  \[
    \norm{k(t) - k(s)} < \frac23 \varepsilon + \norm{k_i(t)-k_i(s)}.
  \]
  Since the finite collection of continuous functions \((k_i)\) is
  equicontinuous, there exists a neighborhood \(U_1\) of \(t\) such
  that \(\norm{k_i(t) - k_i(s)} < \varepsilon/3\) for every
  \(i=1,\ldots,n\) and every \(s \in U_1\). The desired neighborhood
  \(U\) of \(t\) can then be chosen as \(U_0 \cap U_1\).
\end{proof}

\begin{rmk}
  If \(S\) is just a strongly continuous semigroup on \(E\) and
  \(T \in \erre_+^\times\), \(\Gamma\) is a continuous linear operator
  from \(L^1(0,T;E)\) to \(C([0,T];E)\), hence it is also continuous
  between the same spaces endowed with the respective weak
  topologies. Therefore \(\Gamma\) maps (relatively) weakly compact
  sets of \(L^1(0,T;E)\) to (relatively) weakly compact sets of
  \(C([0,T];E)\). Theorem \ref{thm:dcc} implies that if \(S\) is
  compact, then \(\Gamma\) maps weakly compact subsets of
  \(L^1(0,T;E)\) to compact subsets of \(C([0,T];E)\). In fact, weakly
  compact subsets of \(L^1(0,T;E)\) are necessarily uniformly
  integrable (cf., e.g., \cite{DieRueScha:wc}).
  On the other hand, the compactness of \(S\) does not imply the
  compactness of \(\Gamma\) from \(L^1(0,T;E)\) to \(C([0,T];E)\), as
  the elementary counterexample of the identity semigroup on
  \(E=\erre\) shows.
\end{rmk}

\section{Main result}
\label{sec:mr}
Let \(S\) be the strongly continuous contraction semigroup on \(L^2\)
generated by \(-A\). The following assumption will be assumed to hold
from now on.
\begin{hypo}
  The semigroup \(S\) admits a (unique) extension to a strongly
  continuous contraction semigroup on \(L^1\), that will be denoted by
  the same symbol.
\end{hypo}

The graph of the increasing function \(f\colon \erre \to \erre\)
admits a unique extension to a maximal monotone graph in
\(\erre \times \erre\), that will be denoted by the same symbol. In
particular, the domain of the maximal monotone graph \(f\) is
\(\erre\).

For notational convenience, we shall denote the Lebesgue measure on
\([0,T]\) and \(G_T := [0,T] \times G\) by \(m_1\) and \(m\),
respectively. Moreover, we shall identify a function
\(\phi \in L^1(0,T;L^1)\) with the function \(\varphi \in L^1(G_T)\)
such that \(\varphi(t,\cdot)=\phi(t)\) for a.a.\ \(t \in [0,T]\) (the
functions in \(L^1(G_T)\) satisfying this condition are equal to
\(\varphi\) \(m\)-a.e.).
\begin{defi}
  A measurable adapted \(L^2\)-valued process \(u\) is a
  \emph{\(L^1\)-mild solution} to \eqref{eq:Add} if there exists a
  measurable adapted \(L^1\)-valued process \(\zeta\) with paths
  belonging to \(L^1(0,T;L^1)\) such that \((u,\zeta) \in f\)
  \(\P \otimes m\)-a.e.\ and
  \[
    u + S \ast \zeta = Su_0 + S \diamond (B \cdot W)
  \]
  as an identity of \(L^1\)-valued processes.
\end{defi}
Note that, according to the notational conventions in
\S\ref{ssec:not}, the stochastic convolution
\(S \diamond (B \cdot W)\) is the process defined by
\[
  \bigl[ S \diamond (B \cdot W) \bigr](t) = \int_0^t S(t-s) B(s)\,dW(s)
  \qquad \forall t \in \erre_+.
\]
\begin{thm}
  \label{thm:main}
  Let \(T \in \erre_+^\times\). Assume that the semigroup \(S\) is
  compact on \(L^1\) and that the stochastic convolution is
  pathwise bounded on \([0,T] \times G\). Furthermore, assume that
  \((0,0) \in f\). Then there exists a unique \(L^1\)-mild solution
  \(u\) to \eqref{eq:Add}. Moreover, the process
  \(v := u - S \diamond (B \cdot W)\) has paths in
  \(C([0,T];L^1) \cap C_w([0,T];L^2)\).
\end{thm}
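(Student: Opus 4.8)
The plan is to fix $\omega$ in the full-probability event on which the stochastic convolution $W_A := S \diamond (B \cdot W)$ is bounded on $[0,T] \times G$ and to work throughout with the deterministic equation satisfied by $v := u - W_A$. Subtracting the mild form of $W_A$ from \eqref{eq:Add}, one checks that $(u,\zeta)$ is an $L^1$-mild solution exactly when $v$ solves $v + S \ast \zeta = Su_0$ with $(v + W_A, \zeta) \in f$ a.e. First I would regularize by replacing the maximal monotone graph $f$ with its Yosida approximation $f_\lambda$, which is Lipschitz, monotone, and vanishes at $0$. Since $W_A$ is a fixed bounded datum, the semilinear equation $v_\lambda + S \ast f_\lambda(v_\lambda + W_A) = Su_0$ has a unique solution $v_\lambda \in C([0,T];L^2)$ by a standard contraction argument; I set $u_\lambda := v_\lambda + W_A$ and $\zeta_\lambda := f_\lambda(u_\lambda)$, so that $v_\lambda = Su_0 - S \ast \zeta_\lambda$.

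Next I would derive $\lambda$-uniform, pathwise a priori bounds, following \cite{Barbu:lincei}. Applying Lemma \ref{lm:sqnm} and writing $\ip{\zeta_\lambda}{v_\lambda} = \ip{\zeta_\lambda}{u_\lambda} - \ip{\zeta_\lambda}{W_A}$, monotonicity gives $\ip{\zeta_\lambda}{u_\lambda} = \int_G \abs{f_\lambda(u_\lambda)}\,\abs{u_\lambda} \geq 0$, while $\ip{\zeta_\lambda}{W_A}$ is dominated by $\norm{W_A}_{L^\infty}\int_G \abs{f_\lambda(u_\lambda)}$. Splitting this last integral according to whether $\abs{u_\lambda}$ exceeds a fixed multiple of $\norm{W_A}_{L^\infty}$, the large-value part is absorbed into $\int_G\abs{f_\lambda(u_\lambda)}\abs{u_\lambda}$ and the small-value part is controlled through $\abs{f_\lambda} \leq \abs{f^0}$ and the local boundedness of the minimal section $f^0$ (finite everywhere since $\dom f = \erre$). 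This yields, uniformly in $\lambda$, bounds for $v_\lambda$ in $L^\infty(0,T;L^2)$ and for $\int_{[0,T]\times G}\abs{\zeta_\lambda}\abs{u_\lambda}$, hence for $\zeta_\lambda$ in $L^1([0,T]\times G)$. The decisive upgrade from boundedness to weak compactness uses the resolvent point $w_\lambda := u_\lambda - \lambda\zeta_\lambda$, which satisfies $(w_\lambda,\zeta_\lambda) \in f$ a.e.\ and $\int w_\lambda\zeta_\lambda \leq \int u_\lambda\zeta_\lambda$: Lemma \ref{lm:Brz} then delivers the relative weak sequential compactness of $(\zeta_\lambda)$ in $L^1([0,T]\times G)$, equivalently its uniform integrability.

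Uniform integrability of $(\zeta_\lambda)$ on $[0,T]\times G$ implies the same in $L^1(0,T;L^1)$, so, $S$ being compact on $L^1$, Theorem \ref{thm:dcc} makes $\{S \ast \zeta_\lambda\}$ relatively compact in $C([0,T];L^1)$, and hence so is $\{v_\lambda\} = \{Su_0 - S\ast\zeta_\lambda\}$. Along a subsequence I would extract $v_\lambda \to v$ in $C([0,T];L^1)$ and $\zeta_\lambda \to \zeta$ weakly in $L^1([0,T]\times G)$, whence $u_\lambda \to u := v + W_A$ and $w_\lambda \to u$ in $L^0([0,T]\times G)$ (as $\lambda\zeta_\lambda \to 0$ in $L^1$). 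Passing to the limit in $v_\lambda + S \ast \zeta_\lambda = Su_0$, using the weak-to-weak continuity of $\phi \mapsto S \ast \phi$ and uniqueness of limits, gives $v + S \ast \zeta = Su_0$, i.e.\ the mild identity for $u$; the last assertion of Lemma \ref{lm:Brz} simultaneously yields $(u,\zeta) \in f$ a.e., so $(u,\zeta)$ is an $L^1$-mild solution. The $L^\infty(0,T;L^2)$ bound passes to $v$ by lower semicontinuity, and together with $v \in C([0,T];L^1)$ and Lemma \ref{lm:Strauss} (applied with $E=L^2$, $F=L^1$) this gives the asserted regularity $v \in C([0,T];L^1) \cap C_w([0,T];L^2)$.

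For uniqueness I would note that the difference $u_1 - u_2 = v_1 - v_2$ of two solutions is the mild solution of a linear equation driven by the $m$-accretive operator $A$ on $L^1$ with forcing $\zeta_2 - \zeta_1$; the accretivity estimate of \S\ref{ssec:coac} combined with the pointwise inequality $(u_1 - u_2)(\zeta_1 - \zeta_2) \geq 0$ coming from monotonicity of $f$ yields the $L^1$-contraction property, so $\norm{u_1(t) - u_2(t)}_{L^1} \leq \norm{u_1(0) - u_2(0)}_{L^1} = 0$ and $u_1 = u_2$. Uniqueness then forces the whole family $(v_\lambda)$ to converge, so $v$, $u$, and $\zeta$ are bona fide measurable adapted processes rather than mere random functions, the data $u_0$ and $W_A$ being measurable and adapted. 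I expect the main obstacle to lie in the a priori analysis of the reaction terms: since $f$ is merely monotone, with no growth or integrability imposed, no $L^p$-bound with $p>1$ is available for $\zeta_\lambda$, and the entire scheme hinges on extracting weak $L^1$-compactness from the single coercive quantity $\int f_\lambda(u_\lambda)u_\lambda$ through the de la Vall\'ee-Poussin/Dunford-Pettis mechanism (Proposition \ref{prop:dlVPext}, Lemma \ref{lm:Brz})---precisely what lets the compact-convolution Theorem \ref{thm:dcc} stand in for the usual Aubin-Lions-Simon compactness.
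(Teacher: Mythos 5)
Your pathwise analysis follows the paper's strategy almost exactly (Yosida approximation, the energy estimate of Lemma \ref{lm:sqnm}, weak $L^1$-compactness of $(\zeta_\lambda)$ via the resolvent point and Lemma \ref{lm:Brz}, compactness of $(v_\lambda)$ in $C([0,T];L^1)$ via Theorem \ref{thm:dcc}, identification of the limit, and the bracket/accretivity argument for uniqueness of $u$). Your derivation of the a priori bounds is a legitimate variant: instead of passing through the potential $F$ and the Moreau--Yosida regularization $F_\lambda$ with the subdifferential inequality $f_\lambda(v_\lambda+z)v_\lambda \geq F_\lambda(v_\lambda+z)-F_\lambda(z)$, you split $\ip{\zeta_\lambda}{v_\lambda}$ directly and absorb the large-value part of $\int\abs{\zeta_\lambda}$ into $\int\abs{\zeta_\lambda}\abs{u_\lambda}$, using $\abs{f_\lambda}\leq\abs{f^0}$ and local boundedness of the minimal section on $\dom f = \erre$. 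This works, and it even lets you invoke Lemma \ref{lm:Brz} directly rather than Corollary \ref{cor:Brz}, since you obtain a genuine uniform bound on $\int u_\lambda\zeta_\lambda$ rather than one involving $\int\abs{\zeta_\lambda}$ on the right-hand side.

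There is, however, a genuine gap in the final step. First, you prove uniqueness only of $u$, not of the pair $(u,\zeta)$: since $f$ may be multivalued (at jump points of the original increasing function), $\zeta$ is not determined pointwise by $u$, and two admissible selections $\zeta,\zeta'$ only satisfy $S\ast(\zeta-\zeta')=0$. One must show that $\eta\mapsto S\ast\eta$ is injective on $L^1(0,T;L^1)$ (the paper does this with a semigroup-factorization argument and strong continuity); without it you cannot conclude that $\zeta$ is the \emph{unique} weak adherent point of $(\zeta_\lambda)$, hence that the whole family converges weakly rather than just a subsequence depending on $\omega$. Second, and more seriously, your sentence ``so $v$, $u$, and $\zeta$ are bona fide measurable adapted processes'' hides the hardest measurability issue. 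For $v$ the argument is fine: it is a pointwise-in-$(\omega,t)$ limit of measurable adapted processes. But for $\zeta$, even granting that $\zeta_\lambda\to\zeta$ weakly in $L^1(G_T)$ for \emph{every} $\omega$, weak convergence separately in $\omega$ does not by itself yield joint measurability of $(\omega,t,x)\mapsto\zeta(\omega,t,x)$. The paper needs an additional argument: normalizing by the random variable $Z$ to pass to an equivalent measure $\bQ$ under which $\zeta_\lambda\to\zeta$ weakly in $L^1(\bQ\otimes m)$, and then applying Mazur's lemma to obtain strong (hence a.e.) convergence of convex combinations, from which predictability of (a version of) $\zeta$ follows. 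Since the definition of an $L^1$-mild solution explicitly requires $\zeta$ to be a measurable adapted process, this step is part of the existence claim and cannot be omitted.
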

The proof of Theorem \ref{thm:main} will be a consequence of several
intermediate steps.

Let \((f_\lambda)_{\lambda \in \erre_+^\times}\) be the Yosida
approximation of \(f\), defined by
\[
  f_\lambda := \frac{1}{\lambda} \bigl(\id - (\id + \lambda f)^\leftarrow
  \bigr) \qquad \forall \lambda \in \erre_+^\times.
\]
Since \(f_\lambda\colon \erre \to \erre\) is a Lipschitz continuous
function for every \(\lambda \in \erre_+^\times\), the superposition
operator associated to it is Lipschitz continuous on \(L^p(G)\) for
every \(p \in [1,\infty\mathclose[\). Therefore the equation
\[
  du_\lambda + Au_\lambda\,dt + f_\lambda(u_\lambda)\,dt = B\,dW,
  \qquad u(0) = u_0,
\]
admits a unique mild solution \(u_\lambda\), that is a measurable
adapted \(L^2\)-valued process such that
\[
  u_\lambda + S \ast f_\lambda(u_\lambda) = Su_0
  + S \diamond (B \cdot W).
\]
Setting \(z := S \diamond (B \cdot W)\) and
\(v_\lambda := u_\lambda - z\), one has
\begin{equation}
  \label{eq:vlm}
  v_\lambda + \int_0^t S(t-s) f_\lambda(v_\lambda(s)+z(s))\,ds =
  S(t)u_0,
\end{equation}
i.e.\ \(v_\lambda\) is the unique mild solution to the deterministic
evolution equation with random coefficients
\[
  v'_\lambda + Av_\lambda + f_\lambda(v_\lambda+z) = 0,
  \qquad v_\lambda(0)=u_0.
\]
Denoting the norm in \(L^2\) by \(\norm{\cdot}\), Lemma \ref{lm:sqnm}
then implies
\[
  \norm{v_\lambda(t)}^2
  + 2\int_0^t \ip{f_\lambda(v_\lambda + z)}{v_\lambda}\,ds
  \leq \norm{u_0}^2.
\]
Let \(F\colon \erre \to \erre_+\) be a convex function with \(F(0)=0\)
such that \(f = \partial F\), and
\((F_\lambda)_{\lambda \in \erre_+^\times}\) be the Moreau-Yosida
regularization of \(F\), defined by
\[
  F_\lambda(x) := \inf_{y \in \erre} \biggl(
  \frac{1}{2\lambda}\abs{y-x}^2 + F(y) \biggr).
\]
Then \(\partial F_\lambda = f_\lambda\) for
every \(\lambda \in \erre_+^\times\), thus, by definition of
subdifferential,
\[
  f_\lambda(v_\lambda+z)v_\lambda \geq F_\lambda(v_\lambda+z)
  - F_\lambda(z),
\]
hence
\[
  \norm{v_\lambda(t)}^2
  + 2\int_0^t \int_G F_\lambda(v_\lambda+z)
  \leq \norm{u_0}^2 + 2\int_0^t \int_G F_\lambda(z).
\]
Until further notice, let \(\omega \in \Omega\) be arbitrary but
fixed.  Since \(z(\omega)\) is bounded on the bounded set
\([0,T] \times G\), recalling that \(F_\lambda\) converges pointwise
to \(F\) from below as \(\lambda \to 0\) and that the positivity of
\(F\) implies that \(F_\lambda \geq 0\) for every
\(\lambda \in \erre_+^\times\), one infers that the right-hand side,
thus also the left-hand side, is bounded uniformly with respect to
\(t \in [0,T]\) and \(\lambda \in \erre_+^\times\).  The proof of the
next claim is hence evident.
\begin{lemma}
  \label{lm:bdd}
  The following statements are true:
  \begin{itemize}
  \item[(a)] \((v_\lambda)\) is bounded, hence relatively weakly*
    compact, in \(L^\infty(0,T;L^2)\).
  \item[(b)]
    \(\displaystyle \int_0^t \ip{f_\lambda(v_\lambda + z)}{v_\lambda}
    \leq \norm{u_0}^2\) for every \(t \in [0,T]\) and
    \(\lambda \in \erre_+^\times\).
  \item[(c)]
    \(\displaystyle \biggl( \int_{G_T} F_\lambda(v_\lambda+z)
    \biggr)\) is bounded.
  \end{itemize}
\end{lemma}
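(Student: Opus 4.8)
The plan is to extract all three assertions from the single inequality
\[
  \norm{v_\lambda(t)}^2 + 2\int_0^t \int_G F_\lambda(v_\lambda+z)
  \leq \norm{u_0}^2 + 2\int_0^t \int_G F_\lambda(z),
\]
established above for an arbitrary but fixed \(\omega \in \Omega\), together with the two elementary properties \(0 \leq F_\lambda \leq F\) of the Moreau-Yosida regularization. The right-hand side is the only quantity that must be controlled uniformly in \(t\) and \(\lambda\); once that is done, nonnegativity of both summands on the left lets me bound each separately, which yields (a) and (c), while (b) is read off directly from Lemma \ref{lm:sqnm}.

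First I would record that \(F_\lambda \leq F\) (take \(y=x\) in the defining infimum) and \(F_\lambda \geq 0\) (the infimand is a sum of nonnegative terms), so that \(\int_0^t \int_G F_\lambda(z) \leq \int_{G_T} F(z)\). Here the pathwise boundedness hypothesis furnishes an \(R \in \erre_+^\times\) with \(\abs{z(\omega)} \leq R\) on \([0,T]\times G\), and since the maximal monotone graph \(f\) has full domain \(\erre\), its convex primitive \(F\) is finite on all of \(\erre\), hence continuous and therefore bounded on \([-R,R]\) by some constant \(C_R\). Consequently \(\int_{G_T} F(z) \leq C_R\, m(G_T) < \infty\), and the right-hand side of the displayed inequality is dominated by \(\norm{u_0}^2 + 2 C_R\, m(G_T)\), uniformly in \(t \in [0,T]\) and \(\lambda \in \erre_+^\times\).

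From this uniform bound (a) is immediate: since \(F_\lambda \geq 0\), one gets \(\sup_{t,\lambda}\norm{v_\lambda(t)}^2 < \infty\), i.e.\ \((v_\lambda)\) is bounded in \(L^\infty(0,T;L^2)\); relative weak* compactness is then the Banach-Alaoglu theorem, after identifying \(L^\infty(0,T;L^2)\) with the dual of \(L^1(0,T;L^2)\) via the reflexivity of \(L^2\). Statement (c) follows by discarding the nonnegative term \(\norm{v_\lambda(t)}^2\) and taking \(t=T\). Finally, for (b) I would return to Lemma \ref{lm:sqnm}: dropping the nonnegative term \(\norm{v_\lambda(t)}^2\) there gives \(2\int_0^t \ip{f_\lambda(v_\lambda+z)}{v_\lambda} \leq \norm{u_0}^2\) for each \(t\), whence the claimed bound \(\int_0^t \ip{f_\lambda(v_\lambda+z)}{v_\lambda} \leq \norm{u_0}^2\).

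I do not expect a genuine obstacle, as the lemma is essentially a bookkeeping summary of the estimates already assembled. The only two points deserving a word of care are the identification of the predual in (a), needed to invoke Banach-Alaoglu, and the (standard) local boundedness of \(F\) that renders the right-hand side finite; both are harmless given that \(L^2\) is reflexive and \(\dom(f)=\erre\).
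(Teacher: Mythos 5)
Your proof is correct and follows essentially the same route as the paper, which derives all three assertions from the displayed energy inequality preceding the lemma, using \(0 \leq F_\lambda \leq F\), the pathwise boundedness of \(z\), and the finiteness of \(F\) (the paper simply declares the proof ``evident'' after setting up exactly the estimates you elaborate). Your explicit remarks on the local boundedness of \(F\) via \(\dom(f)=\erre\) and on the predual identification for Banach--Alaoglu are correct fillings-in of details the paper leaves implicit.
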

We are going to use these bounds in conjunction with the weak
compactness criteria of \S\ref{ssec:wcL1} to pass to the limit as
\(\lambda \to 0\) in \eqref{eq:vlm}.
\begin{lemma}
  \label{lm:ztl}
  The family \((f_\lambda(v_\lambda + z))\) is weakly compact in
  \(L^1(G_T)\). In particular, there exist \(\zeta \in L^1(G_T)\) and
  a sequence \(\lambda' \subseteq \lambda\) such that
  \((f_{\lambda'}(v_{\lambda'} + z))\) converges to \(\zeta\) weakly
  in \(L^1(G_T)\).
\end{lemma}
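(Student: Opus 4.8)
The plan is to apply Corollary \ref{cor:Brz} with $\gamma = f$ to a sequence of pairs genuinely belonging to the graph $f$, which must be extracted from the approximating problem with some care. The essential point is that $f_\lambda(v_\lambda + z)$ does \emph{not} belong to $f(v_\lambda + z)$, but rather to $f$ evaluated at the resolvent. I would set $w_\lambda := (\id + \lambda f)^\leftarrow(v_\lambda + z)$, so that by the defining property of the Yosida approximation one has $(w_\lambda, f_\lambda(v_\lambda + z)) \in f$ pointwise on $G_T$, together with the resolvent identity
\[
  v_\lambda + z = w_\lambda + \lambda f_\lambda(v_\lambda + z).
\]
Since $f_\lambda$ is Lipschitz with $f_\lambda(0) = 0$ (as $(0,0) \in f$) and $v_\lambda + z \in L^2(G_T)$ by Lemma \ref{lm:bdd}(a) together with the pathwise boundedness of $z$, each $f_\lambda(v_\lambda + z)$ lies in $L^1(G_T)$, so the standing hypotheses of Corollary \ref{cor:Brz} are satisfied.

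Next I would recast the energy estimate into the form demanded by the corollary. Multiplying the resolvent identity by $f_\lambda(v_\lambda + z)$ and integrating over $G_T$ yields
\[
  \int_{G_T} f_\lambda(v_\lambda + z)\, w_\lambda
  = \int_{G_T} f_\lambda(v_\lambda + z)\, v_\lambda
  - \lambda \int_{G_T} \abs{f_\lambda(v_\lambda + z)}^2
  + \int_{G_T} f_\lambda(v_\lambda + z)\, z.
\]
The first term on the right is at most $\norm{u_0}^2$ by Lemma \ref{lm:bdd}(b) with $t = T$, the second is nonpositive and can be discarded, and the last is bounded by $\norm{z}_{L^\infty(G_T)} \int_{G_T} \abs{f_\lambda(v_\lambda + z)}$ because $z(\omega)$ is bounded on $G_T$. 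With $\omega$ still fixed, both $\norm{u_0}^2$ and $\norm{z}_{L^\infty(G_T)}$ are finite constants, whence
\[
  \int_{G_T} f_\lambda(v_\lambda + z)\, w_\lambda
  \lesssim 1 + \int_{G_T} \abs{f_\lambda(v_\lambda + z)},
\]
which is exactly the hypothesis of Corollary \ref{cor:Brz} for the pairs $(w_\lambda, f_\lambda(v_\lambda + z)) \in f$.

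Corollary \ref{cor:Brz} then gives that $(f_\lambda(v_\lambda + z))$ is relatively weakly compact in $L^1(G_T)$; by the Eberlein--\v{S}mulian theorem this upgrades to relative weak sequential compactness, producing a subsequence $\lambda' \subseteq \lambda$ and a limit $\zeta \in L^1(G_T)$ with $f_{\lambda'}(v_{\lambda'} + z) \to \zeta$ weakly in $L^1(G_T)$, as claimed.

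The step I expect to require the most care is the bridge between the energy estimate and the graph inclusion: Lemma \ref{lm:bdd}(b) controls the pairing of $f_\lambda(v_\lambda + z)$ against $v_\lambda$, whereas Corollary \ref{cor:Brz} demands the pairing against the resolvent argument $w_\lambda$. Reconciling the two hinges on the resolvent identity, on discarding the favourably-signed term $-\lambda \int_{G_T} \abs{f_\lambda(v_\lambda + z)}^2$, and, crucially, on the pathwise boundedness of $z$ to absorb the cross term into the $\int_{G_T} \abs{f_\lambda(v_\lambda + z)}$ slot that the corollary is designed to accommodate.
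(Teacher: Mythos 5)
Your proof is correct and follows essentially the same route as the paper: both introduce the resolvent $w_\lambda = (\id+\lambda f)^\leftarrow(v_\lambda+z)$ (the paper's $\xi_\lambda$), observe that $(w_\lambda, f_\lambda(v_\lambda+z))\in f$, use the identity $f_\lambda(x)(x-J_\lambda x)=\lambda f_\lambda(x)^2\geq 0$ (your discarded term) together with Lemma \ref{lm:bdd}(b) and the pathwise boundedness of $z$ to verify the hypothesis of Corollary \ref{cor:Brz}, and conclude. The only cosmetic difference is that you phrase the key inequality via the resolvent identity rather than directly as $f_\lambda(x)J_\lambda(x)\leq f_\lambda(x)x$, which is the same computation.
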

\begin{proof}
  Starting from the relation
  \[
    f_\lambda(v_\lambda + z) \in f \circ (\id + \lambda f)^\leftarrow
    (v_\lambda + z),
  \]
  setting
  \(\xi_\lambda := (\id + \lambda f)^\leftarrow(v_\lambda + z)\), we
  shall obtain a bound for the integral of (a section of)
  \(f(\xi_\lambda) \xi_\lambda\) over \(G_T\). Writing
  \(J_\lambda := (\id + \lambda f)^\leftarrow\) for convenience, the
  inequality
  \[
    f_\lambda(x) (x - J_\lambda x) = \lambda f_\lambda(x)^2 \geq 0
  \]
  implies \(f_\lambda(x) J_\lambda(x) \leq f_\lambda(x)x\) for all
  \(x \in \erre\). Therefore, writing
  \(\zeta_\lambda := f_\lambda(v_\lambda+z)\),
  \[
    \zeta_\lambda \xi_\lambda = f_\lambda(v_\lambda+z)
    J_\lambda(v_\lambda+z) \leq f_\lambda(v_\lambda+z) (v_\lambda+z)
    = \zeta_\lambda (v_\lambda + z),
  \]
  where \((\xi_\lambda,\zeta_\lambda) \in f\) and, by Lemma
  \ref{lm:bdd}(b), there exists a constant \(N \in \erre_+\)
  independent of \(\lambda\) such that
  \[
    \int_{G_T} \zeta_\lambda v_\lambda < N \qquad \forall
    \lambda \in \erre_+^\times.
  \]
  Therefore, denoting the \(L^\infty(G_T)\) norm by
  \(\norm{\cdot}_\infty\),
  \[
    \int_{G_T} \zeta_\lambda \xi_\lambda \leq N + \norm{z}_\infty
    \int_{G_T} \abs{\zeta_\lambda}.
  \]
  Since \(v_\lambda \in L^2(G_T)\), \(z \in L^\infty(G_T)\), and
  \(f_\lambda\) is Lipschitz continuous, one has
  \(\zeta_\lambda \in L^2(G_T) \embed L^1(G_T)\) for every
  \(\lambda\). Corollary \ref{cor:Brz} then implies that
  \((\zeta_\lambda)\) is weakly compact in \(L^1(G_T)\).
\end{proof}

Let us now obtain adherent points of \((v_\lambda)\).
\begin{lemma}
  \label{lm:vlc}
  If \(S\) admits an extension to a compact semigroup on \(L^1\), then
  \((v_\lambda)\) is relatively compact in \(C([0,T];L^1)\).
\end{lemma}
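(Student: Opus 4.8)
The plan is to read \eqref{eq:vlm} as the identity
$v_\lambda = Su_0 - \Gamma\zeta_\lambda$, where $\zeta_\lambda := f_\lambda(v_\lambda+z)$ and $\Gamma\colon \phi \mapsto S \ast \phi$ is the deterministic convolution operator of \S\ref{ssec:dcc} associated with the compact extension of $S$ to $L^1$, and then to invoke Theorem \ref{thm:dcc} with $E = L^1$. The term $Su_0 = S(\cdot)u_0$ is a single element of $C([0,T];L^1)$: indeed $u_0 \in L^2 \embed L^1$ because $G$ is bounded, and $S$ is strongly continuous on $L^1$. A singleton being trivially relatively compact, it suffices to show that the family $(\Gamma\zeta_\lambda)$ is relatively compact in $C([0,T];L^1)$; since translation by the fixed vector $Su_0$ is a homeomorphism of $C([0,T];L^1)$, relative compactness of $(v_\lambda)$ will follow at once.

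To apply Theorem \ref{thm:dcc} I must verify that $(\zeta_\lambda)$ is a uniformly integrable subset of $L^1(0,T;L^1)$. By Lemma \ref{lm:ztl} the family $(\zeta_\lambda)$ is weakly compact, hence relatively weakly compact, in $L^1(G_T)$; since $(G_T,m)$ is a finite measure space, the Dunford-Pettis theorem gives uniform integrability of $(\zeta_\lambda)$ with respect to $m$. This scalar uniform integrability transfers to the vector-valued uniform integrability demanded by Theorem \ref{thm:dcc}: under the identification of $L^1(0,T;L^1)$ with $L^1(G_T)$ recalled in \S\ref{sec:mr}, for any measurable $B \subseteq [0,T]$ one has $\int_B \norm{\zeta_\lambda(s)}_{L^1}\,ds = \int_{B \times G} \abs{\zeta_\lambda}\,dm$, and $m(B \times G) = m_1(B)\,\abs{G}$ is small whenever $m_1(B)$ is small; the $L^1(G_T)$-boundedness of $(\zeta_\lambda)$ likewise yields its boundedness in $L^1(0,T;L^1)$.

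With this established, Theorem \ref{thm:dcc} applies—$S$ being compact on $L^1$ by the hypothesis of the lemma—and gives that $\Gamma\bigl((\zeta_\lambda)\bigr)$ is relatively compact in $C([0,T];L^1)$, whence $(v_\lambda) = Su_0 - \Gamma\bigl((\zeta_\lambda)\bigr)$ is relatively compact in $C([0,T];L^1)$ as argued above. I expect the only genuinely delicate point to be the passage between the two notions of uniform integrability—the scalar one in $L^1(G_T)$ produced by Dunford-Pettis and the vector-valued one in $L^1(0,T;L^1)$ required by Theorem \ref{thm:dcc}—but this is settled by the elementary Fubini-type computation above, the product structure $m = m_1 \otimes \mathrm{Leb}_G$ being exactly what makes small time-slabs correspond to small space-time sets. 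Everything else is a direct chaining of the previously established results.
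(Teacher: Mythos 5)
Your proof is correct and follows essentially the same route as the paper's: weak compactness of \((\zeta_\lambda)\) in \(L^1(G_T)\) from Lemma \ref{lm:ztl} plus Dunford--Pettis gives uniform integrability with respect to \(m\), a Tonelli-type computation transfers this to uniform integrability of \((\zeta_\lambda)\) as \(L^1(G)\)-valued functions on \([0,T]\), and Theorem \ref{thm:dcc} applied to \eqref{eq:vlm} concludes. Your explicit handling of the fixed translate \(S(\cdot)u_0\) and of the passage between the scalar and vector-valued notions of uniform integrability merely spells out what the paper leaves implicit.
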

\begin{proof}
  Adopting the same notation as in the proof of Lemma \ref{lm:ztl},
  the Dunford-Pettis theorem implies that the family
  \((\zeta_\lambda)\), being weakly compact in \(L^1(G_T)\), is
  uniformly integrable with respect to \(m\).\footnote{The reasoning
    leading to the weak compactness of \((\zeta_\lambda)\) actually
    already implies that \((\zeta_\lambda)\) is uniformly integrable.}
  It follows easily by Tonelli's theorem that \((\zeta_\lambda)\),
  considered as a family of \(L^1(G)\)-valued functions on \([0,T]\),
  is also uniformly integrable. In view of \eqref{eq:vlm}, the proof
  is complete appealing to Theorem \ref{thm:dcc}.
\end{proof}

Thanks to Lemma \ref{lm:vlc}, there exist \(v \in C([0,T];L^1)\) and a
subsequence \(\lambda'\) of \(\lambda\) such that
\(v_{\lambda'} \to v\) in \(C([0,T];L^1)\). Passing to a further
subsequence \(\lambda'' \subseteq \lambda'\), one has
\(v_{\lambda''} \to v\) for almost every \((t,x) \in G_T\). The second
part of Lemma \ref{lm:Brz} then implies that \((u,\zeta) \in f\) a.e.\
in \(G_T\).

Taking the limit as \(\lambda \to 0\) in \eqref{eq:vlm} yields,
\[
  v(t) + \int_0^t S(t-s) \zeta(s)\,ds = S(t)u_0
  \qquad \forall t \in [0,T],
\]
or, equivalently, \(v\) is a mild solution in \(L^1(G)\) to the
equation
\[
  v' + Av + \zeta = 0, \qquad v(0)=u_0.
\]
Since \(\omega \in \Omega\) was fixed but arbitrary, this construction
provides a function \(v\colon \Omega \times [0,T] \to L^1\) that,
however, may not even be a measurable process, as the sequence
\(\lambda\) and all its subsequences depend on \(\omega\). This
problem can nonetheless be overcome thanks to the arguments to follow,
in particular Corollary \ref{cor:cvg} below.
\begin{prop}
  Let \(v \in C([0,T];L^1)\) and \(\zeta \in L^1(G_T)\) be such that
  \((v+z,\zeta) \in f\) a.e.\ in \(G_T\) and
  \[
    v' + Av + \zeta = 0, \qquad v(0)=u_0.
  \]
  in the mild sense. Then the pair \((v,\zeta)\) is unique.  
\end{prop}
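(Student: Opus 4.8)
The plan is to show that the difference of any two solutions vanishes by exploiting the accretivity built into the problem, and then to recover uniqueness of the multiplier $\zeta$ from the injectivity of the linear resolvent. So let $(v_1,\zeta_1)$ and $(v_2,\zeta_2)$ be two pairs satisfying the stated hypotheses and set $w := v_1 - v_2$. Subtracting the two mild equations, $w$ is the mild solution in $L^1$ of
\[
  w' + Aw = \zeta_2 - \zeta_1, \qquad w(0) = 0,
\]
so that $w = -\,S \ast (\zeta_1 - \zeta_2)$ as an identity of $L^1$-valued functions on $[0,T]$.

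The natural first tool is the contraction estimate for mild solutions recalled in \S\ref{ssec:coac}, which applies since $A$ is linear and $m$-accretive on $L^1$; taking the right-hand sides to be $-\zeta_1$ and $-\zeta_2$ it gives, for every $t \in [0,T]$,
\[
  \norm{w(t)}_{L^1} \leq \int_0^t \bigl[ w(s), \zeta_2(s) - \zeta_1(s) \bigr]\,ds .
\]
Using the explicit form of the bracket in $L^1$, namely $[x,y] = \int_{\{x \neq 0\}}\operatorname{sgn}(x)\,y + \int_{\{x=0\}}\abs{y}$, together with the monotonicity of $f$ — which, since $(v_i+z,\zeta_i) \in f$ and $(v_1+z)-(v_2+z)=w$, forces $\zeta_1-\zeta_2$ to carry the same sign as $w$ wherever $w\neq 0$ — the integrand decomposes as
\[
  \bigl[ w, \zeta_2 - \zeta_1 \bigr] = -\int_{\{w \neq 0\}} \abs{\zeta_1 - \zeta_2}
  + \int_{\{w = 0\}} \abs{\zeta_1 - \zeta_2}.
\]

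The main obstacle is precisely the diagonal term $\int_{\{w=0\}}\abs{\zeta_1-\zeta_2}$, which the abstract bracket cannot discard: on $\{v_1=v_2\}$ both $\zeta_1$ and $\zeta_2$ are selections of $f$ at the common argument $v_1+z$, and these may genuinely differ at the (countably many) values where $f$ is multivalued, so the $L^1$ estimate alone does not close. To bypass it I would instead run the energy estimate of Lemma \ref{lm:sqnm} in $L^2$, where the semi-inner product degenerates to the genuine inner product and no anomaly at the origin survives: applied to $w = -\,S\ast(\zeta_1-\zeta_2)$ with zero initial datum it yields
\[
  \norm{w(t)}_{L^2}^2 \leq -2\int_0^t \int_G (\zeta_1-\zeta_2)(v_1-v_2)\,dx\,ds \leq 0,
\]
the last inequality being exactly the pointwise monotonicity $(\zeta_1-\zeta_2)(v_1-v_2) \geq 0$ (the term $z$ cancelling, as it is common to both equations). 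The delicate point on this route is integrability, since $\zeta_i$ is only assumed in $L^1(G_T)$: the pairing must be justified, either through the $L^2$-regularity of $v_i$ combined with a truncation of $\zeta_1-\zeta_2$ (the favourable sign of the integrand guaranteeing that the one-sided bound survives the limit), or by reinterpreting each $v_i$ as an integral solution of the $m$-accretive Cauchy problem $v' + Av + f(v+z) \ni 0$ and invoking uniqueness of integral solutions, whose proof absorbs the diagonal term internally via doubling of the time variable.

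Either route gives $w \equiv 0$, that is $v_1 = v_2$. It then remains to identify $\zeta_1$ and $\zeta_2$. From $w \equiv 0$ one has $S\ast(\zeta_1-\zeta_2)(t)=0$ for every $t \in [0,T]$; taking Laplace transforms in $t$ turns this into $(\lambda + A)^{-1}\,\widehat{(\zeta_1-\zeta_2)}(\lambda) = 0$ for $\lambda$ large, and since the resolvent $(\lambda + A)^{-1}$ is injective this forces $\widehat{(\zeta_1-\zeta_2)}(\lambda)=0$, whence $\zeta_1 = \zeta_2$ in $L^1(G_T)$ by injectivity of the Laplace transform. This establishes uniqueness of the pair $(v,\zeta)$.
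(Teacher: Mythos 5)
Your route for the second half (uniqueness of \(\zeta\)) is a genuinely different and essentially sound alternative: the paper proves \(S \ast \eta = 0 \Rightarrow \eta = 0\) by a direct kernel argument (introducing \(F(t,s) = S(t-s)\eta(s)\), showing \(F(u,\cdot)=0\) a.e.\ for each \(u\), and concluding along a countable dense set of times by strong continuity), whereas you take Laplace transforms and use injectivity of the resolvent. Your argument needs one observation you omit: the convolution identity \(\widehat{S \ast \eta}(\lambda) = (\lambda+A)^{-1}\widehat{\eta}(\lambda)\) requires \(S \ast \eta\) to vanish on all of \(\erre_+\), not just on \([0,T]\); this does hold after extending \(\eta\) by zero, since \((S\ast\eta)(t) = S(t-T)(S\ast\eta)(T) = 0\) for \(t \geq T\). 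With that supplement the Laplace route is fine, and arguably cleaner than the paper's.

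The first half is where the proposal has a genuine gap. You abandon the \(L^1\) bracket estimate and switch to the \(L^2\) energy inequality of Lemma \ref{lm:sqnm}, but that lemma requires the forcing term to lie in \(L^1_\loc(\erre_+;L^2)\), while here \(\zeta_i\) is only in \(L^1(G_T)\) and, worse, the proposition only assumes \(v_i \in C([0,T];L^1)\) -- so the pairing \(\int (\zeta_1-\zeta_2)(v_1-v_2)\) is a product of two \(L^1\) functions and is not even defined. Neither of your two suggested repairs is carried out, and neither obviously works: truncating \(\zeta_1-\zeta_2\) destroys the identity \(w = -S\ast(\zeta_1-\zeta_2)\), so the energy inequality you would obtain concerns a different function from \(w\); and reinterpreting the \(v_i\) as integral solutions of \(v' + Av + f(v+z) \ni 0\) presupposes that \(A + f(\cdot+z)\) is \(m\)-accretive in \(L^1\), which is exactly the structural hypothesis this paper is written to avoid and never establishes. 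Note also that the obstacle you diagnose in the \(L^1\) route is not the term the paper's argument relies on. The paper's inequality is \(\norm{v_1(t)-v_2(t)} + \int_0^t [v_1-v_2,\zeta_1-\zeta_2]\,ds \leq 0\), and the nonnegativity of the integrand is the purely abstract fact, recalled in \S\ref{ssec:coac}, that \([x_1-x_2,y_1-y_2] \geq 0\) for pairs in an accretive graph (here the superposition operator \(f\) on \(L^1\)); no pointwise decomposition over \(\{w=0\}\) and \(\{w\neq0\}\) is needed, and in the bracket \([w,\zeta_1-\zeta_2]\) the diagonal contribution \(\int_{\{w=0\}}\abs{\zeta_1-\zeta_2}\) enters with the favourable sign. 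Your computation concerns the oppositely signed bracket \([w,\zeta_2-\zeta_1]\), which is why the diagonal set looks like an obstruction there; whether the passage from the raw inequality of \S\ref{ssec:coac} to the paper's signed form deserves more justification is a fair question, but it is not the one your detour answers, and as it stands the uniqueness of \(v\) is not proved in your proposal.
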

\begin{proof}
  Denoting the bracket in \(L^1\) by \([\cdot{,}\cdot]\), if
  \((v_1,\zeta_1)\) and \((v_2,\zeta_2)\) are two pairs satisfying the
  hypothesis, then, for every \(t \in [0,T]\),
  \[
    \norm{v_1(t) - v_2(t)} + \int_0^t
    [v_1(s)-v_2(s),\zeta_1(s)-\zeta_2(s)]\,ds \leq 0
  \]
  (cf. \S\ref{ssec:coac}).  Since the integrand on the left-hand side
  is positive by accretivity of \(f\), one has \(v_1=v_2\).
  In order to prove uniqueness of \(\zeta\), it suffices to show that
  if \(\eta \in L^1(0,T;L^1)\) is such that \(S \ast \eta = 0\), i.e.\
  \[
    \int_0^t S(t-s)\eta(s)\,ds = 0 \qquad \forall t \in [0,T],
  \]
  then \(\eta = 0\). Introducing the function
  \(F\colon [0,T]^2 \to L^1\) defined by
  \[
    F(t,s) =
    \begin{cases}
      S(t-s)\eta(s), & \text{ if } s \leq t,\\
      0, & \text{ if } s > t,
    \end{cases}
  \]
  let us show that \(F(u,\cdot)=0\) a.e.\ for every \(u \in
  [0,T]\). To this purpose, taking \(u \in [0,T]\) arbitrary but
  fixed, it suffices to show that \(\int_0^t F(u,s)\,ds=0\) for every
  \(t \in [0,T]\). If \(u \leq t\) then
  \[
    \int_0^t F(u,s)\,ds = \int_0^u S(u-s)\eta(s)\,ds = 0,
  \]
  and if \(u > t\) then
  \[
    \int_0^t F(u,s) = \int_0^t S(u-s)\eta(s)\,ds
    = S(u-t) \int_0^t S(t-s)\eta(s)\,ds = 0.
  \]
  We have thus shown that, for every \(t \in [0,T]\),
  \(S(t-s)\eta(s) = 0\) for all \(s \in [0,t]\) outside a set of
  measure zero that may depend on \(t \in [0,T]\).
  Let \(\tau\) be a countable dense subset of \([0,T]\). Then there
  exists a subset \(I\) of \(\mathopen[0,T\mathclose[\) of full
  measure such that \(S(t-s)\eta(s) = 0\) for every \(t \in \tau\) and
  every \(s \in I \cap [0,t]\). Let \(s \in I\) and
  \((t_n) \subset \tau\) a sequence converging to \(s\) from the
  right.\footnote{For instance, one can take \(t_n\) in an interval of
    radius \(1/n\) centered on \(s+2/n\) for \(n\) large enough.} Then
  \(S(t_n-s)\eta(s)=0\) for every \(n\), hence, taking the limit as
  \(n \to \infty\), the strong continuity of \(S\) implies that
  \(\eta(s)=0\). This proves that \(\eta = 0\) almost everywhere,
  i.e.\ that \(\eta = 0\) in \(L^1(0,T;L^1)\).
\end{proof}
\begin{rmk}
  Instead of using the integral inequality for mild solutions
  involving the bracket, one may use a more elementary (but longer)
  argument, proceeding roughly as follows.  Assuming first that
  \(\zeta \in L^1(0,T;\dom(A))\), one takes a smooth approximation
  \(\sigma\) of the sign function and tests the equation on
  \(\sigma(v_1-v_2)\), then passes to the limit showing that
  \(v_1-v_2=0\). Finally, the continuity of \(\zeta \mapsto v\) and
  the density of \(L^1(0,T;\dom(A))\) in \(L^1(0,T;L^1)\) allow to
  remove the assumption on \(\zeta\).
\end{rmk}

\begin{coroll}
  \label{cor:cvg}
  The sequence \((v_\lambda)\) converges to \(v\) in \(C([0,T];L^1)\)
  and the sequence \((\zeta_\lambda)\) converges weakly to \(\zeta\)
  in \(L^1(G_T)\).
\end{coroll}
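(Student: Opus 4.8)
The plan is to promote the subsequential convergence obtained above to convergence of the \emph{full} sequences, the decisive ingredient being the uniqueness established in the preceding Proposition. Throughout I use the elementary principle that a sequence contained in a relatively sequentially compact set converges to a point as soon as every convergent subsequence shares that same limit; in the weak topology of $L^1$ this principle is applicable because relative weak compactness and relative weak sequential compactness coincide there, by the Eberlein-\v{S}mulian theorem.

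First I would extract, along an arbitrary subsequence of $\lambda$, limits of both families at once. By Lemma \ref{lm:vlc} I may pass to a sub-subsequence along which $v_\lambda \to \tilde v$ in $C([0,T];L^1)$, and then by Lemma \ref{lm:ztl} to a further subsequence along which $\zeta_\lambda \to \tilde\zeta$ weakly in $L^1(G_T)$. Since convergence in $C([0,T];L^1)$ entails convergence in $L^1(G_T)$, hence in $L^0(m)$, one has $v_\lambda + z \to \tilde v + z$ in measure; combined with the weak convergence of $(\zeta_\lambda)$, the second part of Lemma \ref{lm:Brz} yields $(\tilde v + z, \tilde\zeta) \in f$ $m$-a.e.

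Next I would identify $(\tilde v, \tilde\zeta)$ as a solution of the limit problem, exactly as in the construction preceding the Proposition. Writing \eqref{eq:vlm} as $v_\lambda + \Gamma\zeta_\lambda = Su_0$ and recalling (cf.\ the Remark following Theorem \ref{thm:dcc}) that $\Gamma\colon L^1(0,T;L^1) \to C([0,T];L^1)$ is linear and continuous, hence weakly continuous, the weak convergence $\zeta_\lambda \to \tilde\zeta$ gives $\Gamma\zeta_\lambda \to \Gamma\tilde\zeta$ weakly in $C([0,T];L^1)$; since $v_\lambda \to \tilde v$ strongly, passing to the weak limit of the constant left-hand side yields $\tilde v + \Gamma\tilde\zeta = Su_0$. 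Hence $(\tilde v, \tilde\zeta)$ satisfies every hypothesis of the preceding Proposition, and by uniqueness $(\tilde v, \tilde\zeta) = (v, \zeta)$. As the initial subsequence was arbitrary, every subsequence of $(v_\lambda)$ admits a further subsequence converging to $v$ in $C([0,T];L^1)$, so $v_\lambda \to v$ there; and every weakly convergent subsequence of $(\zeta_\lambda)$ has weak limit $\zeta$, so $\zeta_\lambda \to \zeta$ weakly in $L^1(G_T)$.

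The only genuine obstacle I anticipate is bookkeeping in the weak topology: one must invoke Eberlein-\v{S}mulian to legitimise the subsequence argument for $(\zeta_\lambda)$ and use the weak continuity of $\Gamma$ to pass to the limit in \eqref{eq:vlm}, since only weak convergence of $(\zeta_\lambda)$ is available. Everything else reduces to a routine diagonal extraction and an appeal to the uniqueness already proved.
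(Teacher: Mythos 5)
Your argument is correct and is essentially the paper's: both proofs combine the relative compactness of $(v_\lambda,\zeta_\lambda)$ (Lemmas \ref{lm:vlc} and \ref{lm:ztl}) with the uniqueness proposition to conclude that the unique adherent point forces convergence of the whole family. The only difference is presentational: you run the subsequence-of-a-subsequence principle (invoking Eberlein--\v{S}mulian for the weak topology and spelling out the passage to the limit in \eqref{eq:vlm} via the weak continuity of $\Gamma$), whereas the paper phrases the same fact as ``a filter on a compact space with a unique adherent point converges'' in the product of $C([0,T];L^1)$ with weak-$L^1(G_T)$.
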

\begin{proof}
  Let \(E\) be the topological space obtained as the product of
  \(C([0,T];L^1)\) and of \(L^1(G_T)\) endowed with its weak topology.
  If \((w,\eta)\) is an adherent point of
  \((v_\lambda,\zeta_\lambda)\) in \(E\), then, as seen above,
  \((w+z,\eta) \in f\) a.e.\ in \(G_T\) and \(w' + Aw + \eta = 0\) in
  the mild sense, with \(w(0)=u_0\). Therefore \(w=v\) and
  \(\eta=\zeta\). Since \((v_\lambda,\zeta_\lambda)\) is relatively
  compact in \(E\) and has \((v,\zeta)\) as unique adherent point, the
  claim follows recalling that a filter on a compact space converges
  if and only if it has a unique adherent point (see, e.g., \cite[TG,
  I, p. 60]{Bbk}).
\end{proof}

As a final step, we consider the measurability properties of \(v\) and
\(\zeta\) considered as random functions. The convention according to
which \(\omega \in \Omega\) is fixed is thus dropped from now on. In
particular, we shall look at \(v\) as an \(L^1\)-valued function on
\(\Omega \times [0,T]\) and at \(\zeta\) as a real function on
\(\Omega \times [0,T] \times G\).
\begin{lemma}
  The map \(v\colon \Omega \times [0,T] \to L^1\) is a predictable
  continuous process.
\end{lemma}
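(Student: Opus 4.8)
The plan is to establish measurability of $v$ by exhibiting it as a limit of measurable processes, then upgrade to predictability using path continuity. The fundamental tension in this statement is that, for each fixed $\omega$, the process $v$ was constructed as a limit along a subsequence $\lambda''$ of approximating solutions $v_\lambda$, and that subsequence depended on $\omega$ in an uncontrolled way. The resolution is already hinted at in Corollary \ref{cor:cvg}: although the subsequence was $\omega$-dependent, the limit $(v,\zeta)$ is the \emph{unique} adherent point, so in fact the \emph{full} family $(v_\lambda)$ converges to $v$ in $C([0,T];L^1)$ for every fixed $\omega$. This is the key structural fact that removes the $\omega$-dependence of the extraction.

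\medskip

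First I would record that each approximating process $u_\lambda$, being the mild solution to an equation with a globally Lipschitz superposition operator $f_\lambda$ and additive noise, is a genuine measurable adapted (indeed predictable, after choosing a continuous modification) $L^2$-valued process; this is standard fixed-point theory for stochastic convolutions. Consequently $v_\lambda = u_\lambda - z$, where $z = S \diamond (B \cdot W)$ is itself predictable, is a predictable process with values in $L^2 \embed L^1$, hence also a predictable $L^1$-valued process. The map $\omega \mapsto v_\lambda(\omega,\cdot)$ is therefore measurable from $\Omega$ into $C([0,T];L^1)$ (once a continuous modification is fixed), and it is adapted in the sense that $v_\lambda(\cdot,t)$ is $\cF_t$-measurable for each $t$.

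\medskip

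Next, the crucial step: for every fixed $\omega$ the full net $(v_\lambda(\omega))_{\lambda}$ converges to $v(\omega)$ in $C([0,T];L^1)$ as $\lambda \to 0$, by Corollary \ref{cor:cvg}. Hence, selecting any fixed countable sequence $\lambda_n \downarrow 0$ (independent of $\omega$), one has $v_{\lambda_n}(\omega) \to v(\omega)$ in $C([0,T];L^1)$ for \emph{every} $\omega \in \Omega$. Since $v$ is now a pointwise limit of the $\Omega \to C([0,T];L^1)$ measurable maps $v_{\lambda_n}$, and $C([0,T];L^1)$ is a separable metric space, $v$ is itself measurable as a map $\Omega \to C([0,T];L^1)$; equivalently $v$ is a measurable $L^1$-valued process with continuous paths. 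Adaptedness passes to the limit as well: for each $t$, $v(\cdot,t)$ is the pointwise limit of the $\cF_t$-measurable random variables $v_{\lambda_n}(\cdot,t)$, hence $\cF_t$-measurable. A measurable, adapted process with continuous (in particular left-continuous) paths is predictable, which gives the claim.

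\medskip

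\textbf{The main obstacle} I expect is justifying the $\omega$-independent convergence of the full family cleanly: one must invoke Corollary \ref{cor:cvg} to know that every fixed $\omega$ yields convergence of all of $(v_\lambda)$, not merely of an $\omega$-dependent subsequence, and then observe that a fixed deterministic sequence $\lambda_n \to 0$ therefore works simultaneously for all $\omega$. Once this is in place, the measurability is a soft consequence of pointwise limits of measurable maps into a Polish space, and the upgrade from measurable-plus-adapted-plus-continuous to predictable is routine. A minor technical point is to fix, for each $\lambda_n$, a jointly measurable (predictable) modification of $v_{\lambda_n}$ with continuous $L^1$-paths; this is available because the stochastic convolution $z$ admits such a modification by hypothesis and $v_{\lambda_n}$ solves a deterministic (in $\omega$) integral equation whose data depend measurably on $\omega$.
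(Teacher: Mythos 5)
Your proposal is correct and follows essentially the same route as the paper: the paper's proof likewise rests entirely on Corollary \ref{cor:cvg}, which upgrades the \(\omega\)-dependent subsequential convergence to convergence of the full family, so that \(v\) is a pointwise limit on \(\Omega\times[0,T]\) of the predictable processes \(v_\lambda\) (equivalently, is measurable, adapted, and continuous), hence predictable. Your write-up simply makes explicit the two steps the paper leaves implicit, namely the predictability of each \(v_\lambda\) and the extraction of a fixed deterministic sequence \(\lambda_n\downarrow 0\).
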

\begin{proof}
  It follows immediately by Corollary \ref{cor:cvg} that
  \((v_\lambda)\) converges to \(v\) in \(L^1\) pointwise on
  \(\Omega \times [0,T]\), hence \(v\) is a predictable \(L^1\)-valued
  process with continuous trajectories.  The same conclusion can also
  be obtained noting that Corollary \ref{cor:cvg} implies that \(v\)
  is measurable, adapted, and continuous, in particular predictable.
\end{proof}

\begin{lemma}
  There exists a predictable weakly continuous \(L^2\)-valued process
  \(y\) that, as \(L^1\)-valued process, is indistinguishable from
  \(v\).
\end{lemma}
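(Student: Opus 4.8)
The plan is to take $y$ to be the process $v$ itself, after checking that it in fact takes values in $L^2$, is weakly $L^2$-continuous pathwise, and is $L^2$-predictable, up to a truncation on a null set of $\omega$'s. The weak continuity will be immediate from Lemma \ref{lm:Strauss}; the delicate point will be the joint measurability.

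First I would fix an event $\Omega_0$ of full probability on which Corollary \ref{cor:cvg} is valid and on which $z$ is bounded on $[0,T] \times G$. For $\omega \in \Omega_0$, Lemma \ref{lm:bdd}(a) provides a constant $C = C(\omega)$ with $\norm{v_\lambda(t)} \leq C$ for a.e.\ $t \in [0,T]$ and every $\lambda \in \erre_+^\times$. Since $v_\lambda(t) \to v(t)$ in $L^1$ for every $t$ by Corollary \ref{cor:cvg}, and the ball of radius $C$ in $L^2$ is weakly compact, for a.e.\ $t$ a subsequence of $(v_\lambda(t))$ converges weakly in $L^2$, hence also weakly in $L^1$ (because $L^\infty(G) \subset L^2(G)$ on the finite-measure set $G$); uniqueness of the weak $L^1$ limit identifies it with $v(t)$, so that $v(t) \in L^2$ with $\norm{v(t)} \leq C$. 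Consequently $v(\omega,\cdot) \in L^\infty(0,T;L^2)$.

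For the weak continuity I would invoke Lemma \ref{lm:Strauss} with $E = L^2$ and $F = L^1$: as $L^2$ is reflexive and densely and continuously embedded in $L^1$, and $v(\omega,\cdot) \in C([0,T];L^1) \subseteq C_w([0,T];L^1)$ together with $v(\omega,\cdot) \in L^\infty(0,T;L^2)$, it follows that $v(\omega,\cdot) \in C_w([0,T];L^2)$ for every $\omega \in \Omega_0$. In particular, weak lower semicontinuity of the norm along the path upgrades the previous bound to $\norm{v(\omega,t)} \leq C(\omega)$ for \emph{every} $t \in [0,T]$.

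The main obstacle is the predictability of $v$ as an $L^2$-valued map, since the construction of $v$ is entirely pathwise and does not transparently yield joint measurability into $L^2$. Here I would argue via weak measurability and the Pettis theorem. For every $h \in L^\infty(G) = (L^1)'$ the real process $(\omega,t) \mapsto \ip{h}{v(\omega,t)}$ is predictable, being the composition of the $L^1$-predictable process $v$ with a continuous linear functional on $L^1$. Given $h \in L^2$, approximating it in $L^2$ by functions $h_n \in L^\infty(G)$ and using the pointwise bound $\norm{v(\omega,t)} \leq C(\omega)$ on $\Omega_0 \times [0,T]$, one gets $\ip{h_n}{v(\omega,t)} \to \ip{h}{v(\omega,t)}$ pointwise there, so that $(\omega,t) \mapsto \ip{h}{v(\omega,t)}$ is predictable for every $h \in L^2$. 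Hence $v$ is weakly predictable as an $L^2$-valued map on $\Omega_0 \times [0,T]$, and since $L^2$ is separable the Pettis measurability theorem upgrades this to strong predictability. Finally, since $\Omega_0^c$ is $\P$-null and the filtration is complete, $\Omega_0 \in \cF_0$, so that $y := v\,\ind{\Omega_0}$ is a predictable $L^2$-valued process; it is weakly continuous by the previous step and coincides with $v$ on $\Omega_0$, hence is indistinguishable from $v$ as an $L^1$-valued process.
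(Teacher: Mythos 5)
Your proposal is correct and follows essentially the same route as the paper: the bound from Lemma \ref{lm:bdd} places \(v(\omega)\) in \(L^\infty(0,T;L^2)\), Lemma \ref{lm:Strauss} yields weak \(L^2\)-continuity, and predictability is obtained by testing against \(L^\infty\) functions, passing to \(L^2\) test functions by density, and invoking the Pettis measurability theorem. The only (cosmetic) difference is that you identify \(v\) itself with its weakly \(L^2\)-continuous representative and truncate on a null set, whereas the paper introduces \(y\) as a Lebesgue version of each path and checks indistinguishability via continuity of \(\ip{v(\omega)}{\varphi}\) and \(\ip{y(\omega)}{\varphi}\) for \(\varphi \in L^\infty\); your implicit use of that same identification is harmless.
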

\begin{proof}
  Lemma \ref{lm:bdd} implies that, for every \(\omega \in \Omega\),
  \(v(\omega) \in L^\infty(0,T;L^2)\), hence, by Lemma
  \ref{lm:Strauss}, \(v(\omega) \in C_w([0,T];L^2)\).
  For every \(\omega \in \Omega\), let \(y(\omega)\) be a Lebesgue
  version of \(v(\omega) \in C([0,T];L^1)\) such that
  \(y(\omega) \in C_w([0,T];L^2)\). Then for every
  \(\omega \in \Omega\) and every \(\varphi \in L^\infty\) the real
  functions \(\ip{v(\omega)}{\varphi}\) and
  \(\ip{y(\omega)}{\varphi}\) are continuous and equal almost
  everywhere on \([0,T]\), hence they are equal everywhere on
  \([0,T]\). Therefore the processes \(\ip{v}{\varphi}\) and
  \(\ip{y}{\varphi}\) are equal (i.e.\ indistinguishable) for every
  \(\varphi \in L^\infty\), hence \(y\) is indistinguishable from
  \(v\) as \(L^1\)-valued processes.
  Let \(\psi \in L^2\). There exists a sequence
  \((\psi_n) \subset L^\infty\) such that \(\psi_n \to \psi\) in
  \(L^2\). Since \(\ip{y}{\psi_n} = \ip{v}{\psi_n}\) is predictable
  and \(\ip{y(\omega,t)}{\psi_n} \to \ip{y(\omega,t)}{\psi}\) for
  every \((\omega,t) \in \Omega \times [0,T]\), \(\ip{y}{\psi}\) is
  predictable.  As \(\psi\) is arbitrary, the Pettis measurability
  theorem implies that \(y\) is predictable as an \(L^2\)-valued
  process.
\end{proof}
\begin{rmk}
  The measurability of \(v\) as an \(L^2\)-valued map does not follow
  from the relative weak* compactness of \((v_\lambda(\omega))\) in
  \(L^\infty(0,T;L^2)\) because this only implies that a subsequence
  of \((v_\lambda(\omega))\) depending on \(\omega\) converges to
  \(v(\omega)\) weakly* in \(L^\infty(0,T;L^2)\).
\end{rmk}

Finally, let us establish measurability properties of
\(\zeta\colon \Omega \times [0,T] \times G \to \erre\).  Since
\((\zeta_\lambda)\) is bounded in \(L^1(G_T)\) \(\P\)-a.s., the random
variable
\[
  Z := \Bigl( 1 + \sup_{\lambda} \norm[\big]{\zeta_\lambda}_{L^1(G_T)}
  \Bigr)^{-1}
\]
is strictly positive and bounded by one.  Therefore the probability
measure \(\bQ\) on \(\cF\) defined by
\[
  \bQ := \frac{1}{\bQ Z} Z \cdot \P
\]
is equivalent to \(\P\). Let
\((\omega,t,x) \mapsto \varphi(\omega,t,x) \in L^\infty(\bQ \otimes
m)\). Then \(\varphi(\omega,\cdot,\cdot) \in L^\infty(G_T)\) for
\(\bQ\)-a.e.\ \(\omega \in \Omega\) and, denoting the canonical pairing
between \(L^1(G_T)\) and \(L^\infty(G_T)\) by \(\ip{\cdot}{\cdot}\),
\(\ip{\zeta_\lambda}{\varphi} \to \ip{\zeta}{\varphi}\) \(\bQ\)-a.s.\
and, setting
\(N := \sup_{\lambda} \norm[\big]{\zeta_\lambda}_{L^1(G_T)}\), one has
\(N \in L^1(\bQ)\) and
\[
  \abs[\big]{\ip{\zeta_\lambda}{\varphi}} \leq N
  \norm{\varphi}_{L^\infty(\bQ \otimes m)} \qquad \bQ\text{-a.s.},
\]
hence, by the dominated convergence theorem,
\(\bQ \ip{\zeta_\lambda}{\varphi} \to \bQ \ip{\zeta}{\varphi}\).
Since \(\varphi\) is arbitrary, this shows that
\(\zeta_\lambda \to \zeta\) weakly in \(L^1(\bQ \otimes m)\).
Therefore there exists a sequence \((\widetilde{\zeta}_n)\) of
(finite) convex combinations of \((\zeta_\lambda)\) such that
\(\widetilde{\zeta}_n \to \zeta\) in \(L^1(\bQ \otimes m)\).  Let
\(\cP\) and \(\cB(G)\) denote the predictable \(\sigma\)-algebra and
the Borel \(\sigma\)-algebra of \(G\), respectively. Since
\(\widetilde{\zeta}_n\) is \(\cP \otimes \cB(G)\)-measurable, its
limit \(\zeta\) belongs to
\(L^1(\Omega \times [0,T] \times G, \cP \otimes \cB(G), \bQ \otimes
m)\), in particular \(\zeta\) is measurable with respect to the
\(\bQ \otimes m\)-completion of \(\cP \otimes \cB(G)\). Then
\((\omega,t) \mapsto \zeta(\omega,t,\cdot)\) is an \(L^1\)-valued
function that is measurable with respect to the
\(\bQ \otimes m_1\)-completion of \(\cP\). Therefore, recalling that
\(\bQ\) is equivalent to \(\P\), there exists a
\(\P \otimes m_1\)-version of \(\zeta\), denoted by the same symbol,
that is measurable with respect to \(\cP\).

\smallskip

We have thus completed the proof of Theorem \ref{thm:main}.

\begin{rmk}
  Minor modifications of the proof allow to consider equation
  \eqref{eq:Add} with \(A\) and \(f\) quasi-monotone, rather than
  monotone. This means that there exist constants \(\alpha\),
  \(\beta \in \erre_+\) such that \(A + \alpha\operatorname{id}\) and
  \(f + \beta\operatorname{\id}\) are monotone. Setting
  \(k :=\alpha+\beta\), the problem is equivalent to proving existence
  and uniqueness of an \(L^1\)-mild solution to
  \[
    du + \bigl(Au + f(u) - ku\bigr)\,dt = B\,dW, \qquad u(0) = u_0,
  \]
  with \(A\) and \(f\) monotone. Replacing \(f\) by its Yosida
  approximation \(f_\lambda\), writing the equation in mild form, and
  subtracting the stochastic convolution, one obtains the equation
  \[
    v'_\lambda + Av_\lambda + f_\lambda(v_\lambda + z) = k(v_\lambda + z),
    \qquad v_\lambda(0) = u_0.
  \]
  Then one easily proves that Lemma \ref{lm:bdd} continues to hold,
  after which the rest of the proof can be repeated almost word by
  word.
\end{rmk}

\begin{rmk}
  The assumption \((0,0) \in f\) is not a major restriction. In fact,
  if \(\alpha \in \erre\) is such that \((0,\alpha) \in f\) and the
  graph \(g \subset \erre \times \erre\) is defined by
  \(g(x) := f(x) - \alpha\) for every \(x \in \erre\), so that
  \((0,0) \in g\), the mild form of equation \eqref{eq:Add} reads
  \[
    u + S \ast (g(u) + \alpha) = Su_0 + S \diamond (B \cdot W),
  \]
  or equivalently, denoting the function \((t,x) \mapsto 1\) by \(1\),
  \[
    u + S \ast g(u) = Su_0 + S \diamond (B \cdot W)
    - \alpha S \ast 1.
  \]
  Setting \(z := S \diamond (B \cdot W) - \alpha S \ast 1\), it is
  clear that the whole proof goes through unchanged, provided that
  \(S \ast 1\) is bounded on \(G_T\). This is the case, for instance,
  if \(S\) is sub-Markovian.
\end{rmk}

\begin{rmk}
  The choice of the noise term as a Wiener process is for convenience
  only. Inspection of the proof quickly reveals that the noise
  \(M := B \cdot W\) can be replaced by any other process \(M\) for
  which the stochastic convolution \(S \diamond M\) can be
  meaningfully defined and has paths that are bounded in time and
  space. In general, however, depending on the measurability property
  of the stochatic convolution, one may only obtain measurable
  processes as solutions.
\end{rmk}

\bibliographystyle{plainurl}
\bibliography{ref}

\def\polhk#1{\setbox0=\hbox{#1}{\ooalign{\hidewidth
  \lower1.5ex\hbox{`}\hidewidth\crcr\unhbox0}}}
\begin{thebibliography}{10}

\bibitem{Barbu:lincei}
V.~Barbu.
\newblock Existence for semilinear parabolic stochastic equations.
\newblock {\em Atti Accad. Naz. Lincei Cl. Sci. Fis. Mat. Natur. Rend. Lincei
  (9) Mat. Appl.}, 21(4):397--403, 2010.
\newblock \href {https://doi.org/10.4171/RLM/579} {\path{doi:10.4171/RLM/579}}.

\bibitem{Barbu:type}
V.~Barbu.
\newblock {\em Nonlinear differential equations of monotone types in {B}anach
  spaces}.
\newblock Springer, New York, 2010.
\newblock \href {https://doi.org/10.1007/978-1-4419-5542-5}
  {\path{doi:10.1007/978-1-4419-5542-5}}.

\bibitem{Bbk}
N.~Bourbaki.
\newblock {\em {\'E}l\'ements de math\'ematique}.
\newblock Springer, 2006--present.

\bibitem{Bre-mm}
H.~Br{\'e}zis.
\newblock Monotonicity methods in {H}ilbert spaces and some applications to
  nonlinear partial differential equations.
\newblock In {\em Contributions to nonlinear functional analysis (Proc.
  Sympos., Math. Res. Center, Univ. Wisconsin, Madison, Wis., 1971)}, pages
  101--156. Academic Press, New York, 1971.
\newblock \href {https://doi.org/10.1016/B978-0-12-775850-3.50009-1}
  {\path{doi:10.1016/B978-0-12-775850-3.50009-1}}.

\bibitem{Bmax}
H.~Br{\'e}zis.
\newblock {\em Op\'erateurs maximaux monotones et semi-groupes de contractions
  dans les espaces de {H}ilbert}.
\newblock North-Holland, Amsterdam, 1973.

\bibitem{BvN:hn}
Z.~Brze{\'z}niak and J.~van Neerven.
\newblock Space-time regularity for linear stochastic evolution equations
  driven by spatially homogeneous noise.
\newblock {\em J. Math. Kyoto Univ.}, 43(2):261--303, 2003.
\newblock \href {https://doi.org/10.1215/kjm/1250283728}
  {\path{doi:10.1215/kjm/1250283728}}.

\bibitem{DPZ}
G.~Da~Prato and J.~Zabczyk.
\newblock {\em Stochastic equations in infinite dimensions}.
\newblock Cambridge University Press, second edition, 2014.
\newblock \href {https://doi.org/10.1017/CBO9781107295513}
  {\path{doi:10.1017/CBO9781107295513}}.

\bibitem{DieRueScha:wc}
J.~Diestel, W.~M. Ruess, and W.~Schachermayer.
\newblock Weak compactness in {{\({L}^1({{\mu}},{X})\)}}.
\newblock {\em Proc. Am. Math. Soc.}, 118(2):447--453, 1993.
\newblock \href {https://doi.org/10.1090/S0002-9939-1993-1132408-X}
  {\path{doi:10.1090/S0002-9939-1993-1132408-X}}.

\bibitem{Dor:NLHY}
J.~R. Dorroh.
\newblock A nonlinear {Hille}-{Yosida}-{Phillips} theorem.
\newblock {\em J. Funct. Anal.}, 3:345--353, 1969.
\newblock \href {https://doi.org/10.1016/0022-1236(69)90030-5}
  {\path{doi:10.1016/0022-1236(69)90030-5}}.

\bibitem{KhoSS:reg}
D.~Khoshnevisan and M.~Sanz-Sol{\'e}.
\newblock Optimal regularity of {SPDEs} with additive noise.
\newblock {\em Electron. J. Probab.}, 28:Paper No. 142, 31 pp., 2023.
\newblock \href {https://arxiv.org/abs/2208.01728} {\path{arXiv:2208.01728}},
  \href {https://doi.org/10.1214/23-EJP1043} {\path{doi:10.1214/23-EJP1043}}.

\bibitem{cm:div3}
C.~Marinelli and L.~Scarpa.
\newblock A note on doubly nonlinear {SPDE}s with singular drift in divergence
  form.
\newblock {\em Atti Accad. Naz. Lincei Rend. Lincei Mat. Appl.},
  29(4):619--633, 2018.
\newblock \href {https://arxiv.org/abs/1712.05595} {\path{arXiv:1712.05595}},
  \href {https://doi.org/10.4171/RLM/825} {\path{doi:10.4171/RLM/825}}.

\bibitem{cm:AP18}
C.~Marinelli and L.~Scarpa.
\newblock A variational approach to dissipative {SPDE}s with singular drift.
\newblock {\em Ann. Probab.}, 46(3):1455--1497, 2018.
\newblock \href {https://arxiv.org/abs/1604.08808} {\path{arXiv:1604.08808}},
  \href {https://doi.org/10.1214/17-AOP1207} {\path{doi:10.1214/17-AOP1207}}.

\bibitem{cm:refi}
C.~Marinelli and L.~Scarpa.
\newblock Refined existence and regularity results for a class of semilinear
  dissipative {SPDE}s.
\newblock {\em Infin. Dimens. Anal. Quantum Probab. Relat. Top.},
  23(2):2050014, 34, 2020.
\newblock \href {https://arxiv.org/abs/1711.11091} {\path{arXiv:1711.11091}},
  \href {https://doi.org/10.1142/S0219025720500149}
  {\path{doi:10.1142/S0219025720500149}}.

\bibitem{cm:semimg}
C.~Marinelli and L.~Scarpa.
\newblock Well-posedness of monotone semilinear {SPDE}s with semimartingale
  noise.
\newblock In {\em S\'{e}minaire de {P}robabilit\'{e}s {LI}}, volume 2301 of
  {\em Lecture Notes in Math.}, pages 259--301. Springer, Cham, 2022.
\newblock \href {https://arxiv.org/abs/1805.07562} {\path{arXiv:1805.07562}},
  \href {https://doi.org/10.1007/978-3-030-96409-2\_9}
  {\path{doi:10.1007/978-3-030-96409-2\_9}}.

\bibitem{Mor:Conv}
J.~J. Moreau.
\newblock Fonctionnelles convexes.
\newblock {\em S\'eminaire sur les \'equations aux d\'eriv\'ees partielles},
  1966--1967(2):1--108.
\newblock URL: \url{https://www.numdam.org/item/SJL_1966-1967___2_1_0/}.

\bibitem{rock}
R.~T. Rockafellar.
\newblock {\em Convex analysis}.
\newblock Princeton University Press, 1970.

\bibitem{Strauss}
W.~A. Strauss.
\newblock On continuity of functions with values in various {B}anach spaces.
\newblock {\em Pacific J. Math.}, 19:543--551, 1966.
\newblock \href {https://doi.org/10.2140/pjm.1966.19.543}
  {\path{doi:10.2140/pjm.1966.19.543}}.

\end{thebibliography}

\end{document}